\numberwithin{equation}{section}
\begin{document}

\newtheorem*{thm*}{Theorem A}
\newtheorem{mthm}{Theorem}
\newtheorem{mcor}{Corollary}
\newtheorem{mpro}{Proposition}
\newtheorem{mfig}{figure}
\newtheorem{mlem}{Lemma}
\newtheorem{mdef}{Definition}
\newtheorem{mrem}{Remark}
\newtheorem{mpic}{Picture}
\newtheorem{rem}{Remark}[section]
\newcommand{\ra}{{\mbox{$\rightarrow$}}}
\newtheorem{thm}{Theorem}[section]
\newtheorem{pro}{Proposition}[section]
\newtheorem{lem}{Lemma}[section]
\newtheorem{defi}{Definition}[section]
\newtheorem{cor}{Corollary}[section]
\newcommand{\myref}[1]{A}

\numberwithin{equation}{section}
\newtheorem*{acknow}{Acknowlegement}

\title[]{Partial regularity and Liouville theorems for stable solutions of anisotropic elliptic equations}

\author{Mostafa Fazly }

\address{Department of Mathematics, The University of Texas at San Antonio, San Antonio, TX 78249, USA}
\email{mostafa.fazly@utsa.edu}

\author{Yuan Li }
\address{College of Mathematics and
Econometrics, Hunan University, Changsha 410082,
                 PRC and Department of Mathematics, The University of Texas at San Antonio, San Antonio, TX 78249, USA.  }
\email{liy93@hnu.edu.cn and yuan.li3@utsa.edu }

\thanks{This work is part of the second author's Ph.D. dissertation and supported by a China Scholarship Council funding.}

\subjclass[2010]{Primary 35B08; Secondary  35A01}

\keywords{Quasilinear equation, partial regularity, Liouville  theorem, stable solutions, Hausdorff dimension.}

\date{}

\begin{abstract}
We study the quasilinear elliptic equation
\begin{equation}
-Qu=e^u \indent\mbox{in}\indent \Omega\subset \mathbb{R}^{N},\nonumber
\end{equation}
where the operator $Q$, known as Finsler-Laplacian (or anisotropic Laplacian), is defined by
$$Qu:=\sum_{i=1}^{N}\frac{\partial}{\partial x_{i}}(F(\nabla u)F_{\xi_{i}}(\nabla u)),$$
where $F_{\xi_{i}}=\frac{\partial F}{\partial\xi_{i}}$ and $F: \mathbb{R}^{N}\rightarrow[0,+\infty)$ is a convex function of  $ C^{2}(\mathbb{R}^{N}\setminus\{0\})$, that satisfies certain assumptions. For bounded domain $\Omega$ and for a stable weak solution of the above equation, we prove that the Hausdorff dimension of singular set does not exceed $N-10$. 
 For the entire space, we apply Moser iteration arguments, established by Dancer-Farina and   Crandall-Rabinowitz in the context, to prove Liouville theorems for stable solutions  and for finite Morse index solutions in dimensions $N<10$ and $2<N<10$, respectively. We also provide an explicit solution that is stable outside a compact set in $N=2$. In addition, we provide similar Liouville theorems for the power-type nonlinearities. 
\end{abstract}

\maketitle

\section{Introduction and main results}
We study  stable weak solutions of the quasilinear Finsler-Liouville equation
\begin{equation}\label{1.1e}
-Qu=e^u \indent\mbox{in}\indent\Omega,
\end{equation}
where $\Omega$ is a subset of $\mathbb{R}^{N}$  and the operator $Q$ is defined by
$$Qu:=\sum_{i=1}^{N}\frac{\partial}{\partial x_{i}}(F(\nabla u)F_{\xi_{i}}(\nabla u)),$$
where $F_{\xi_{i}}:=\frac{\partial F}{\partial\xi_{i}}$ and $F: \mathbb{R}^{N}\rightarrow[0,+\infty)$ is a convex function of  $ C^{2}(\mathbb{R}^{N}\setminus\{0\})$ such that
 $F(t\xi)=|t|F(\xi)$ for any $t\in\mathbb{R}$ and $\xi\in\mathbb{R}^{N}$. The above equation is a particular case of  the quasilinear equation with nonlinearity $f\in C^1(\mathbb R)$,
\begin{equation}\label{1.1}
-Qu=f(u) \indent\mbox{in}\indent\Omega.
\end{equation}
 We assume that $F(\xi)>0$ for any $\xi\neq0$ and for such a function $F$, there exist constant $0<a\leq b<\infty$, $0<\lambda\leq\Lambda<\infty$ such that
\begin{equation}\label{1.2}
a|\xi|\leq F(\xi)\leq b|\xi|  \indent\mbox{for any}\indent \xi\in\mathbb{R}^{N},
\end{equation}
and
\begin{equation}\label{1.3}
\lambda^{2}|V|^{2}\leq F_{\xi_{i}\xi_{j}}(\xi)V_{i}V_{j}\leq \Lambda|V|^{2},
\end{equation}
for any $\xi\in\mathbb{R}^{N}$ and $V\in\xi^{\bot}$ where $\xi^{\bot}:=\{V\in\mathbb{R}^{N}: \langle V, \xi\rangle=0\}$. The operator $Q$ is known as anisotropic Laplacian or Finsler Laplacian operator in the literature.  When $F(\xi)=|\xi|$, that is the isotropic case, the operator $Q$ becomes the classical Laplacian operator. As an anisotropic Laplacian, such operators have been  studied vastly in the literature. In early twenty century,  Wulff \cite{W} used such operators to study crystal shapes and minimization of anisotropic surface tensions. The operator $Q$  is closely connected with a smooth,
convex hypersurface in $\mathbb R^N$, called the Wulff shape (or equilibrium crystal shape) of $F$.
The Wulff shape was introduced and studied by Wulff in \cite{W}.   In order to provide a few references in this context,
Wang and Xia in \cite{WX12} extended the classical result of Brezis and Merle \cite{BM91} to equation (\ref{1.1e}) in two dimensions. See also \cite{12} where the authors study an overdetermined problem for anisotropic equations.  Caffarelli et al. in  \cite{cgs} established gradient estimates and monotonicity formulae for quasilinear equations in order to study entire solutions, see also \cite{faz}. Cozzi et al. in \cite{2,cfv} proved such estimates and  formulae for singular, degenerate, anisotropic equations, see also \cite{FV14,LM}. We also refer interested readers to \cite{CS09, 6, 11} and references therein in the context of quasilinear equations.

What follows is the definition of weak and stable solutions of (\ref{1.1}).
\begin{defi}
We say that $u$ is a weak solution of (\ref{1.1}), if $u\in H^{1}_{loc}(\Omega)$ and $f(u)\in L^{1}_{loc}(\Omega)$, the following hold
$$\int_{\Omega}F(\nabla u)F_{\xi}(\nabla u)\cdot\nabla\phi dx=\int_{\Omega}f(u)\phi dx,$$
 for all $\phi\in C_{c}^{\infty}(\Omega)$.
\end{defi}

\begin{defi}
We say that the weak solution of equation (\ref{1.1}) is stable, if $f'(u)\in L^{1}_{loc}(\Omega)$, holds
\begin{equation}\label{1.5}
\int_{\Omega}F_{\xi_{i}}(\nabla u)F_{\xi_{j}}(\nabla u)\phi_{x_{i}}\phi_{x_{j}}+F(\nabla u)F_{\xi_{i}\xi_{j}}(\nabla u)\phi_{x_{i}}\phi_{x_{j}}-f'(u)\phi^{2}dx\geq0,
\end{equation}
for all $\phi\in C_{c}^{1}(\Omega)$.
\end{defi}


Here we provide some properties and definitions related to the operator $Q$. Let $F^{0}$ be the support function of $K:=\{x\in\mathbb{R}^{N}: F(x)<1\}$ which is defined by
$$F^{0}(x):=\sup_{\xi\in K}\langle x, \xi\rangle.$$
We denote $B_{r}(x_{0}):=\{x\in\mathbb{R}^{N}: F^{0}(x-x_{0})<r\}$ a Wulff ball of radius $r$ with center at $x_{0}$, for convenience, we will use this notation $B_{r}(x_{0})$ throughout the paper. Denote $\kappa_{0}=|B_{1}(x_{0})|$, where $|B_{1}(x_{0})|$ is the Lebesgue measure of $B_{1}(x_{0})$.
 By the assumptions on $F$, one can see that the following properties hold.  Some of these are discussed in detail in \cite{6,12}.
\begin{thm*}
We have the following properties:

(1) $|F(x)-F(y)|\leq F(x+y)\leq F(x)+F(y)$;

(2) $|\nabla F(x)|\leq C$ for any $x\neq0$;

(3) $\langle \xi, \nabla F(\xi)\rangle=F(\xi)$, $\langle x, \nabla F^{0}(x)\rangle=F^{0}(x)$ for any $x\neq0$, $\xi\neq0$;

(4) $\sum_{j=1}^{N}F_{\xi_{i}\xi_{j}}(\xi)\xi_{j}=0$, for any $i=1,2,\cdots,N$;

(5) $F(\nabla F^{0}(x))=1$, $F^{0}(\nabla F(x))=1$;

(6) $F_{\xi_{i}}(t\xi)=sgn(t)F_{\xi_{i}}(\xi)$;

(7) $F^{0}(x)F_{\xi}(\nabla F^{0}(x))=x$.
\end{thm*}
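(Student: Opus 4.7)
The plan is to exploit only two structural features of $F$: positive $1$-homogeneity (with the additional symmetry $F(-\xi)=F(\xi)$ built into $F(t\xi)=|t|F(\xi)$) and convex duality between $F$ and its polar $F^{0}$, which is by construction the support function of the unit sublevel set $K=\{F<1\}$. Items (1), (3), (4), (6) fall out of the first ingredient; items (5) and (7), which are the genuine content, require the second; and (2) is a simple smoothness/compactness remark.

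I would handle the homogeneity cluster first. For (3), differentiate $F(t\xi)=tF(\xi)$ in $t$ and set $t=1$; this is Euler's identity. The identical argument applies to $F^{0}$ once one observes that $F^{0}(tx)=tF^{0}(x)$ for $t>0$ directly from its definition as a supremum. For (6), differentiate $F(t\xi)=|t|F(\xi)$ in $\xi_{i}$ and cancel the $|t|$ on the right against $t$ on the left, leaving the $\mathrm{sgn}(t)$ factor. Property (4) is then the derivative of (3) with respect to $\xi_{i}$, after the spurious $F_{\xi_{i}}$ on both sides cancels. For (1), combine convexity with $1$-homogeneity in the usual way, $F(x+y)=2F\bigl(\tfrac{x+y}{2}\bigr)\le F(x)+F(y)$; the reverse inequality is the standard reverse triangle inequality applied to $F(x)=F((x+y)+(-y))$. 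For (2), note that $\nabla F$ is $0$-homogeneous by (6) and continuous on the Euclidean sphere $\{|\xi|=1\}$ by the $C^{2}$ hypothesis, so it is uniformly bounded on $\mathbb{R}^{N}\setminus\{0\}$.

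The core step is the duality pair (5) and (7). For (5) I would argue via the envelope theorem: $F^{0}(x)$ is the maximum of $\langle x,\cdot\rangle$ over the compact set $\partial K$, which is strictly convex because of the ellipticity in (\ref{1.3}), so the maximizer $\xi^{*}(x)$ is unique and smooth in $x$; a Lagrange multiplier computation with constraint $F(\xi)=1$ yields $\nabla F^{0}(x)=\xi^{*}(x)$, hence $F(\nabla F^{0}(x))=1$. The companion identity $F^{0}(\nabla F(x))=1$ follows by bipolar symmetry, since $F$ is in turn recovered as the support function of $\{F^{0}<1\}$. For (7), set $\Phi=\tfrac12(F^{0})^{2}$ and $\Psi=\tfrac12 F^{2}$; these are Legendre-conjugate and hence have mutually inverse gradients. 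Substituting $\nabla\Phi(x)=F^{0}(x)\nabla F^{0}(x)$ and $\nabla\Psi(\xi)=F(\xi)\nabla F(\xi)$ into $\nabla\Psi(\nabla\Phi(x))=x$, and then using (5) together with the $0$-homogeneity of $\nabla F$ to collapse $F(F^{0}(x)\nabla F^{0}(x))=F^{0}(x)$ and $\nabla F(F^{0}(x)\nabla F^{0}(x))=F_{\xi}(\nabla F^{0}(x))$, yields exactly $x=F^{0}(x)F_{\xi}(\nabla F^{0}(x))$. The main obstacle I anticipate is the smoothness justification for $\xi^{*}(x)$, i.e., converting the ellipticity assumption (\ref{1.3}) into strict convexity of $K$ and uniqueness of the maximizer in the definition of $F^{0}$; everything else is bookkeeping from homogeneity.
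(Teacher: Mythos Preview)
Your proof sketch is correct and follows the standard route. Note, however, that the paper does not actually give a proof of Theorem~A: it merely states the properties and refers the reader to \cite{6,12} for details. So there is no in-paper argument to compare against; what you have written is essentially the argument one finds in those references (Euler's identity and its derivatives for (3), (4), (6); convexity plus homogeneity for (1); $0$-homogeneity and compactness for (2); and the support-function/Legendre-duality computation for (5) and (7)). The only point where you should be a bit more careful is, as you yourself flag, the passage from the tangential ellipticity assumption (\ref{1.3}) to strict convexity of $\partial K$ and hence uniqueness and smoothness of the maximizer $\xi^{*}(x)$; this is where the cited references do the work.
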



Let $\Omega$ be a  bounded domain. Here we provide the definition of Hausdorff dimension and singular set, see \cite{LY}.
\begin{defi}
 Let $A$ be a subset of $\mathbb{R}^{N}$, $0\leq s\leq\infty$ and $0\leq\delta\leq\infty$. Set
$$H_{\delta}^{s}:=\inf\left\{\sum_{j=1}^{\infty}\alpha(s)\left(\frac{diamC_{j}}{2}\right)^{s}\Big| A\subset\cup_{j=1}^{\infty}C_{j}, diamC_{j}\leq\delta\right\},$$
where $\alpha(s)=\frac{\pi^{\frac{s}{2}}}{\Gamma(\frac{s}{2}+1)}$, $0\leq s<\infty$ and $\Gamma(s)$ is the $\Gamma$-function. Let $H^{s}$ be the $s$-dimensional Hausdorff measure that is defined as
$$H^{s}(A):=\lim_{\delta\rightarrow0}H_{\delta}^{s}(A)=\sup_{\delta>0}H_{\delta}^{s}(A).$$
The Hausdorff dimension of a set $A\subset\mathbb{R}^{N}$ is defined as
$$H_{dim}(A):=\inf\{0\leq s<\infty| H^{s}(A)=0\}.$$
\end{defi}
Here is the definition of the singular set $S$, see \cite{W1}.
\begin{defi}
The singular set $S$ of a solution $u$ contain those point where in any neighborhood of this point $u$ is not bounded, its complement is the regular set of $u$.
\end{defi}
Here is our main result addressing partial regularity of solutions of (\ref{1.1e}).
\begin{thm}\label{thm5}
Assume that for all $x, y\in \mathbb{R}^{N}$,
\begin{equation}\label{1.7}
\langle F_{\xi}(x), F_{\xi}^{0}(y)\rangle=\frac{\langle x,y\rangle}{F(x)F^{0}(y)} . 
\end{equation}
 If $u$ is a stable weak solution of (\ref{1.1e}) with $\Omega\subset\mathbb{R}^{N}$, where $\Omega$ is a bounded domain, then the Hausdorff dimension of the singular set $S$ does not exceed $N-10$.
\end{thm}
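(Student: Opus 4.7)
The plan is to adapt the Brezis--V\'azquez/Crandall--Rabinowitz Moser-iteration argument for $-\Delta u = e^u$ to the Finsler operator $Q$, and then feed the resulting $L^{p}_{\mathrm{loc}}$ bound into a Federer-type dimension-reduction scheme. The key a priori estimate I would target is: for every $\alpha \in (0,2)$, $e^{u} \in L^{2\alpha+1}_{\mathrm{loc}}(\Omega)$ with bounds on Wulff balls depending only on $\alpha$ and the constants in (\ref{1.2})--(\ref{1.3}); letting $\alpha\uparrow 2$ then yields $e^{u} \in L^{p}_{\mathrm{loc}}(\Omega)$ for every $p \in [1,5)$.

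To derive this estimate I would test the stability inequality (\ref{1.5}) with $\phi = e^{\alpha u}\eta$, $\eta \in C_{c}^{2}(\Omega)$. Property (3) of Theorem A gives $F_{\xi_{i}}(\nabla u)\phi_{x_{i}} = e^{\alpha u}\bigl(\alpha\eta F(\nabla u) + F_{\xi}(\nabla u)\cdot\nabla\eta\bigr)$, whereas property (4) forces every occurrence of $u_{x_{i}}$ inside $F_{\xi_{i}\xi_{j}}(\nabla u)\phi_{x_{i}}\phi_{x_{j}}$ to vanish, leaving $e^{2\alpha u}F_{\xi_{i}\xi_{j}}(\nabla u)\eta_{x_{i}}\eta_{x_{j}}$, which (\ref{1.3}) bounds by $\Lambda|\nabla\eta|^{2}e^{2\alpha u}$. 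Substituting into (\ref{1.5}) produces
\begin{equation*}
\int_{\Omega} e^{(2\alpha+1)u}\eta^{2}\,dx \leq \alpha^{2}\int_{\Omega} e^{2\alpha u}\eta^{2} F(\nabla u)^{2}\,dx + \mathcal{R}_{1}(\eta,u),
\end{equation*}
where $\mathcal{R}_{1}$ collects every term carrying at least one factor of $\nabla\eta$. The $F(\nabla u)^{2}$-integral is then eliminated by testing the weak equation with $\phi = e^{2\alpha u}\eta^{2}$; property (3) once more gives
\begin{equation*}
2\alpha\int_{\Omega} e^{2\alpha u}\eta^{2} F(\nabla u)^{2}\,dx = \int_{\Omega} e^{(2\alpha+1)u}\eta^{2}\,dx - 2\int_{\Omega} e^{2\alpha u}\eta F(\nabla u)F_{\xi}(\nabla u)\cdot\nabla\eta\,dx.
\end{equation*}
Substituting, applying Young's inequality, and using the hypothesis (\ref{1.7}) to dominate the anisotropic cross term $F_{\xi}(\nabla u)\cdot\nabla\eta$ by the factorised quantity $F(\nabla u)F^{0}(\nabla\eta)$, I would absorb the remaining $F(\nabla u)^{2}$-integrals into the left-hand side whenever $\alpha<2$, obtaining a bound on $\int e^{(2\alpha+1)u}\eta^{2}\,dx$ in terms of cut-off quantities only.

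For $N<10$ the admissible range $p<5$ contains some $p>N/2$, so the classical $L^{\infty}$-theory for quasilinear operators (Serrin/Tolksdorf/DiBenedetto, cf.\ the anisotropic adaptations in \cite{cgs,cfv,FV14}) upgrades the $L^{p}$ bound on $e^{u}$ to $u\in L^{\infty}_{\mathrm{loc}}(\Omega)$, and bootstrapping gives $S=\varnothing$. For general $N$ I would close the argument by dimension reduction: under the scaling $v_{r}(x):=u(x_{0}+rx)+2\log r$, the equation and its stability are preserved (by $1$-homogeneity of $F$ and hence $2$-homogeneity of $Q$), and the preceding a priori estimate transfers with uniform constants in $r$. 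A scale-invariant $\varepsilon$-regularity lemma then shows that $x_{0}\notin S$ whenever the normalised mass $r^{2p-N}\int_{B_{r}(x_{0})}e^{pu}\,dx$ is small for some $r>0$; a standard covering argument (in the spirit of \cite{LY,W1}) finally bounds $H^{N-10+\varepsilon}(S)=0$ for every $\varepsilon>0$. The main obstacle is the first stage above: properties (3)--(4) of Theorem A, the bound (\ref{1.3}), and especially the structural identity (\ref{1.7}) must be combined carefully to reduce the algebraically involved Finslerian stability inequality to the isotropic template, since (\ref{1.7}) is precisely what factorises the mixed term $F_{\xi}(\nabla u)\cdot\nabla\eta$ through anisotropic quantities of $\eta$ alone; a secondary challenge is the low-regularity quasilinear nature of $Q$ in the dimension-reduction step, which will require a delicate compactness argument for blow-up sequences.
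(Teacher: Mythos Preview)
Your overall architecture---Moser iteration to get $e^{u}\in L^{p}_{\mathrm{loc}}$ for all $p<5$, then a scale-invariant $\varepsilon$-regularity statement, then a covering argument---is exactly the paper's. Your Moser iteration is also essentially the paper's Proposition~\ref{pro3}(i) (with $2\alpha$ there playing the role of your $\alpha$), and your endgame via Besicovitch covering matches the paper's final proof verbatim.

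Two points of divergence are worth flagging. First, you place hypothesis~(\ref{1.7}) in the wrong step: the Moser iteration does \emph{not} require it. The cross term $F_{\xi}(\nabla u)\cdot\nabla\eta$ is simply bounded by $C|\nabla\eta|$ via property~(2) of Theorem~A, and the Hessian term by~(\ref{1.3}); this is what the paper does. Where (\ref{1.7}) actually enters is the $\varepsilon$-regularity step, specifically to obtain a mean-value inequality for $Q$-subharmonic functions (used for $e^{w}$ with $Qw=0$), which in the anisotropic setting is only known under (\ref{1.7}) (cf.\ \cite{6}).

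Second, and more substantively, your $\varepsilon$-regularity is a black box for $N\geq 10$, which is precisely the regime where the theorem has content. Your Serrin-type argument only works when some $p<5$ exceeds $N/2$, i.e.\ $N<10$; for $N\geq 10$ you fall back on an unspecified ``blow-up compactness'' that you yourself flag as a challenge. The paper avoids this entirely by a direct level-set argument: decompose $u=v+w$ with $Qw=0$, use the anisotropic co-area formula and isoperimetric inequality to control the distribution function of $v$, combine with the $L^{2}$ bound on $e^{u}$ (from the Moser step with $\alpha=1$) and interpolation to get a quantitative decay $r^{2-N}\int_{B_{r}}e^{u}\leq\frac{1}{2}\varepsilon$ (Lemma~\ref{lem4.1}); iterating this decay places $e^{u}$ in a Morrey space $M^{N/(2-\delta)}$, and a second level-set computation then gives $v\in L^{\infty}$ directly (Lemma~\ref{lem4.2}). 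No compactness or blow-up limit is needed. This is the main technical content of the proof, and your proposal does not anticipate it.
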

When $F(\xi)=|\xi|$, for the Laplacian operator, Da Lio \cite{D} proved that the Hausdorff dimension of singular set of stable solution  is at most $1$ in dimension $N=3$.  Wang \cite{W1,W2} extended this result to higher dimensions, and showed that the Hausdorff dimension does not exceed $N-10$.

We now consider $\Omega$ to the entire space $\mathbb{R}^{N}$.  Here we list our main results for such domains. The first result is a Liouville  theorem for stable solutions. 
\begin{thm}\label{thm1}
 If $N < 10$, then there is no stable solution of equation  (\ref{1.1e}).
\end{thm}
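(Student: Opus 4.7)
The plan is to transplant the Moser iteration of Farina (and Dancer--Farina) for $-\Delta u=e^u$ to the anisotropic setting, using the $1$-homogeneity identities satisfied by $F$ to neutralize the extra anisotropic terms. Suppose for contradiction that $u$ is a stable solution on $\mathbb{R}^{N}$ with $N<10$. I fix $\alpha\in(0,2)$ to be pinned down later and test the stability inequality (\ref{1.5}) against $\phi=\eta\,e^{\alpha u}$ for a cutoff $\eta\in C^{1}_{c}(\mathbb{R}^{N})$. Two algebraic simplifications drive the computation: property $(3)$ gives $F_{\xi_{i}}(\nabla u)\,u_{x_{i}}=F(\nabla u)$, which turns the first quadratic form in (\ref{1.5}) into $e^{2\alpha u}\bigl(\alpha\eta F(\nabla u)+F_{\xi}(\nabla u)\cdot\nabla\eta\bigr)^{2}$, while property $(4)$ gives $F_{\xi_{i}\xi_{j}}(\nabla u)\,u_{x_{j}}=0$, which annihilates every term in the Hessian-type form that carries a factor $u_{x_{i}}$ and leaves only $e^{2\alpha u}F(\nabla u)F_{\xi_{i}\xi_{j}}(\nabla u)\eta_{x_{i}}\eta_{x_{j}}$. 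Applying the ellipticity bound (\ref{1.3}) to the component of $\nabla\eta$ orthogonal to $\nabla u$ controls this remainder by $\Lambda\,e^{2\alpha u}F(\nabla u)|\nabla\eta|^{2}$.

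In parallel, I would test the equation (\ref{1.1e}) against $\eta^{2}e^{2\alpha u}$ and integrate by parts; invoking $F_{\xi}(\nabla u)\cdot\nabla u=F(\nabla u)$ once more this yields the identity
\begin{equation*}
\int e^{(1+2\alpha)u}\eta^{2}\,dx=2\alpha\int e^{2\alpha u}\eta^{2}F(\nabla u)^{2}\,dx+2\int e^{2\alpha u}\eta F(\nabla u)\,F_{\xi}(\nabla u)\cdot\nabla\eta\,dx.
\end{equation*}
Using this identity to replace $\int e^{(1+2\alpha)u}\eta^{2}$ on the right-hand side of the stability inequality combines the two $F(\nabla u)^{2}$ contributions with net coefficient $-(2\alpha-\alpha^{2})<0$ for $\alpha\in(0,2)$; a Cauchy--Schwarz step with a small parameter absorbs the remaining $F(\nabla u)$ cross terms into the left-hand side, and then $|F_{\xi}|\leq C$ from property $(2)$ together with (\ref{1.3}) collects everything into the key self-improving estimate
\begin{equation*}
\int e^{(1+2\alpha)u}\eta^{2}\,dx\leq C_{\alpha}\int e^{2\alpha u}|\nabla\eta|^{2}\,dx.
\end{equation*}

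To close the argument I take $\eta$ to be a standard cutoff on Wulff balls with $\eta\equiv 1$ on $B_{R}$, $\eta\equiv 0$ outside $B_{2R}$, and $|\nabla\eta|\leq C/R$; H\"older's inequality with conjugate exponents $\frac{1+2\alpha}{2\alpha}$ and $1+2\alpha$ converts $\int e^{2\alpha u}$ into a power of $\int e^{(1+2\alpha)u}$ at the price of a volume factor $\sim R^{N/(1+2\alpha)}$, and iterating the resulting Moser-type inequality yields the power-type bound
\begin{equation*}
\int_{B_{R}}e^{(1+2\alpha)u}\,dx\leq C\,R^{N-2-4\alpha}.
\end{equation*}
The exponent is negative precisely when $\alpha>(N-2)/4$, and this admissible window intersects $(0,2)$ exactly when $N<10$; for any such $\alpha$, letting $R\to\infty$ forces $\int_{\mathbb{R}^{N}}e^{(1+2\alpha)u}=0$, a contradiction. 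I expect the principal obstacle to lie not in the algebraic reductions, which close cleanly once properties $(3)$ and $(4)$ are invoked, but in justifying the limit in the Moser iteration: one must either first establish a preliminary polynomial bound for $\int_{B_{R}}e^{(1+2\alpha)u}$ so that the tail of the iteration vanishes, or replace the cutoff $\eta$ by $\eta^{m}$ for $m$ sufficiently large so that the cutoff errors are absorbed directly.
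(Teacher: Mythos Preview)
Your approach is the paper's: Farina's Moser-type argument, with the anisotropic terms neutralized exactly as you describe via the homogeneity identities (properties (3) and (4) of Theorem~A). There is, however, a real slip in your claimed ``key self-improving estimate''. After you bound the Hessian remainder by $\Lambda\int e^{2\alpha u}F(\nabla u)|\nabla\eta|^{2}$, you say a Cauchy--Schwarz step absorbs the $F(\nabla u)$ factor into the left-hand side and leaves only $\int e^{2\alpha u}|\nabla\eta|^{2}$. But the Cauchy step actually reads
\[
e^{2\alpha u}F(\nabla u)\,|\nabla\eta|^{2}\ \leq\ \varepsilon\,e^{2\alpha u}F(\nabla u)^{2}\eta^{2}\ +\ \frac{1}{4\varepsilon}\,e^{2\alpha u}\,\frac{|\nabla\eta|^{4}}{\eta^{2}},
\]
so the surviving error carries $|\nabla\eta|^{4}/\eta^{2}$, not $|\nabla\eta|^{2}$. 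This extra term is precisely what distinguishes the anisotropic case from Farina's isotropic one (where the Hessian contribution is absent), and it is the $|\nabla\psi|^{4}$ piece that the paper records in Proposition~\ref{pro3}(i).

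The paper repairs this by taking $\phi=\psi^{m}$ with $m\geq 10$ from the outset (so that $|\nabla\phi|^{4}/\phi^{2}=m^{4}\psi^{2m-4}|\nabla\psi|^{4}$ is nonsingular) and then applying Young's inequality directly to the two error terms to absorb the factor $e^{2\alpha u}$ into the left-hand side. No H\"older-plus-iteration loop is needed; the bound $\int_{B_{R}}e^{(1+2\alpha)u}\leq CR^{N-2(1+2\alpha)}$ falls out in a single step, and the contradiction for $N<10$ follows by choosing $2\alpha\in(\tfrac{N-2}{2},4)$. Your proposed fix of replacing $\eta$ by $\eta^{m}$ is therefore the correct move, but it is needed already to make the basic estimate close (and to make $|\nabla\eta|^{4}/\eta^{2}$ meaningful where $\eta$ vanishes), not merely to justify convergence of an iteration at the end.
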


The following is the Liouville theorem for finite Morse index solution.

\begin{thm}\label{thm3}
For $3\leq N\leq9$, under the assumption of (\ref{1.7}), then equation (\ref{1.1e}) does not admit any solution stable outside a compact set of $\mathbb{R}^{N}$. If $N=2$, then
\begin{align}\label{1.6}
u(x)=-2\log(1+\frac{1}{8}\lambda^{2}F^{0}(x-x_{0})^{2})+2\log\lambda,
\end{align}
 for some $\lambda>0$ and $x_{0}\in\mathbb{R}^{2}$, is stable outside a compact set of $\mathbb{R}^{2}$ of  (\ref{1.1e}).
\end{thm}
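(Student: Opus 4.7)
The plan is to prove the two parts of the theorem independently. For the non-existence in dimensions $3 \le N \le 9$, I would adapt the Moser iteration of Theorem \ref{thm1}. Assume for contradiction that $u$ solves (\ref{1.1e}) and is stable outside a compact set $K$, and pick $R_0>0$ with $K\subset B_{R_0}$, so that the stability inequality (\ref{1.5}) holds for every $\phi\in C^1_c(\mathbb{R}^N\setminus\overline{B_{R_0}})$. Choose a smooth cutoff $\eta$ that vanishes on $B_{R_0}$, equals $1$ on the annulus $B_R\setminus B_{2R_0}$, and vanishes outside $B_{2R}$, and plug $\phi=e^{\alpha u}\eta$ into (\ref{1.5}). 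The anisotropic cross terms are handled by (\ref{1.7}) exactly as in the proof of Theorem \ref{thm1}. The inner ring $B_{2R_0}\setminus B_{R_0}$ contributes an error bounded uniformly in $R$, because $u$ is bounded there by the elliptic regularity of stable solutions, while the outer ring produces the same algebraic obstruction that excluded stable solutions for $N<10$. Letting $R\to\infty$ then forces $\int_{\mathbb{R}^N}e^{(1+2\alpha)u}=0$ for a suitable $\alpha>0$, which is the desired contradiction.

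For the $N=2$ part I would directly verify that
\begin{equation*}
u(x) = -2\log\!\bigl(1+\tfrac{1}{8}\lambda^2 F^0(x-x_0)^2\bigr) + 2\log\lambda
\end{equation*}
is a solution stable outside a compact set. Write $r=F^0(x-x_0)$ and $u=g(r)$. Properties $(5)$ and $(6)$ give $F(\nabla u)=|g'(r)|$ and $F_\xi(\nabla u)=\mathrm{sgn}(g'(r))F_\xi(\nabla F^0(x-x_0))$, and property $(7)$ rewrites the latter as $(x-x_0)/r$. Hence $F(\nabla u)F_\xi(\nabla u)=g'(r)(x-x_0)/r$, and taking the divergence while using property $(3)$ to evaluate $(x-x_0)\cdot\nabla F^0(x-x_0)=r$ collapses the operator to the Euclidean radial form
\begin{equation*}
Qu = g''(r) + \frac{N-1}{r}\,g'(r).
\end{equation*}
For $N=2$ this is the ODE satisfied by the classical Liouville bubble, so a one-line computation yields $-Qu=e^u$. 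Stability on $\{F^0(x-x_0)\ge R\}$ for $R$ large follows because the stability form pulls back along the same reduction to a one-dimensional quadratic form controlled by the $F^0(x-x_0)^{-4}$ decay of $e^u$, which is a standard Hardy-type bound at infinity.

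The main obstacle is the first part. The iteration itself is identical to that of Theorem \ref{thm1}, but one must track the error produced by the inner cutoff in $B_{2R_0}\setminus B_{R_0}$ carefully, and verify that assumption (\ref{1.7}) continues to close the cross-term bookkeeping when the test function is only required to vanish on a fixed compact set rather than everywhere near the origin. The $N=2$ construction is, by contrast, essentially an exercise once the radial reduction $Qu=g''+(N-1)g'/r$ is in hand; property $(7)$ does all the nontrivial work by turning the anisotropy into the Euclidean radius.
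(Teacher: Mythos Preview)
Your plan for the nonexistence part in $3\le N\le 9$ contains a genuine gap. Running the Moser iteration of Proposition~\ref{pro3} with an annular cutoff $\eta$ gives
\[
\int_{B_R\setminus B_{2R_0}} e^{(\alpha+1)u}\,dx \;\le\; A \;+\; B\,R^{\,N-2(\alpha+1)},
\]
where $A>0$ comes entirely from $\nabla\eta$ on the fixed inner ring $B_{2R_0}\setminus B_{R_0}$ and is independent of $R$. Choosing $\alpha$ close to $4$ kills the second term as $R\to\infty$, but $A$ stays; you only conclude $e^{(\alpha+1)u}\in L^1(\mathbb{R}^N)$, which is not a contradiction. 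This is precisely why the argument of Theorem~\ref{thm1} does not carry over verbatim: there the cutoff is a single ball, there is no inner error, and the limit is genuinely zero. Your sentence ``Letting $R\to\infty$ then forces $\int_{\mathbb{R}^N}e^{(1+2\alpha)u}=0$'' is therefore false as stated.

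The paper does carry out your iteration as its Step~1, obtaining exactly the estimate above, but then needs two further ingredients to reach a contradiction. In Step~3 it feeds the integrability into a Serrin--Harnack inequality for subsolutions of $-Qw\le e^u w$ to get the pointwise decay $F^0(x)^2 e^{u(x)}\to 0$. In Step~4 it averages $u$ over Wulff spheres and uses assumption~(\ref{1.7}) to rewrite $\langle\nabla u,x\rangle$ as $F(\nabla u)\langle F_\xi(\nabla u),\nu\rangle F^0(x)$ on $\partial B_r$; the divergence theorem then yields $-v'(r)\le C/r$, hence $r^2 e^{v(r)}\to\infty$, and Jensen's inequality contradicts Step~3. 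Note in particular that (\ref{1.7}) is \emph{not} used to close any cross-term bookkeeping in the Moser iteration (Proposition~\ref{pro3} holds without it); its role in this theorem is solely to make the sphere-averaging computation in Step~4 work.

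Your radial reduction for $N=2$ is correct, but the stability verification needs the logarithmic refinement: in two dimensions the Hardy constant $(N-2)^2/4$ vanishes, so the bare $r^{-4}$ decay of $e^u$ does not immediately give a coercive quadratic form. The paper instead bounds $e^u\le \bigl(4\,F^0(x)^2\log^2 F^0(x)\bigr)^{-1}$ for $F^0(x)$ large and uses the explicit positive supersolution $v=\log^{1/2}(F^0(x))$ of $-Qv=\bigl(4\,F^0(x)^2\log^2 F^0(x)\bigr)^{-1}v$ to certify stability outside a large Wulff ball.
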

When $F(\xi)=|\xi|$,   Farina in \cite{5} proved an analogues of Theorem \ref{thm1}. Dancer and Farina in \cite{3,3d} proved a counterpart of Theorem \ref{thm3}. The methods applied in here are the Moser iteration arguments developed in this context by Crandall and Rabinowitz \cite{CR}.

For power-type nonlinearities, we prove the following Liouville theorem for for stable solutions of (\ref{1.1}). This is a counterpart of Theorem \ref{thm1}.

\begin{thm}\label{thm1.5}
 The equation  (\ref{1.1}) does not admit positive weak stable solution if
\begin{enumerate}
\item[(i)] $f(u)=u^{p}$ for $p>3$ and $N<\frac{6p+4\sqrt{p^{2}-p}-2}{p-1}$.

\item[(ii)]  $f(u)=-u^{-p}$  for  $p>\frac{1}{3}$ and $N<\frac{6p+4\sqrt{p^{2}+p}+2}{p+1}$.
\end{enumerate}
\end{thm}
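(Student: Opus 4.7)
My plan is to adapt the Moser-iteration / dual-testing strategy of Farina to the anisotropic operator $Q$. Writing
$$A_{ij}(\xi):=F_{\xi_i}(\xi)F_{\xi_j}(\xi)+F(\xi)F_{\xi_i\xi_j}(\xi)$$
for the quadratic form appearing in (\ref{1.5}), properties (3) and (4) of Theorem~A yield the two algebraic identities
$$A_{ij}(\nabla u)\,u_i u_j=F(\nabla u)^2,\qquad A_{ij}(\nabla u)\,u_i\eta_j=F(\nabla u)\,F_\xi(\nabla u)\cdot\nabla\eta,$$
while $|F_\xi|\le C$ together with (\ref{1.3}) gives $A_{ij}(\nabla u)\,\eta_i\eta_j\le C(1+F(\nabla u))|\nabla\eta|^2$. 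These exact identities are what should make Farina's program transfer to the Finsler setting with no loss of sharpness.

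Consider case (i). Fix a parameter $\beta>\tfrac12$ and a cutoff $\eta=\tilde\eta^m$, where $\tilde\eta\in C_c^1(\mathbb R^N)$ is a standard bump with $\tilde\eta\equiv 1$ on $B_R$, $\tilde\eta\equiv 0$ off $B_{2R}$, $|\nabla\tilde\eta|\le C/R$, and $m$ is a large integer to be fixed. I would first test the weak equation (\ref{1.1}) against $u^{2\beta-1}\eta^2$; via $F_\xi\cdot\nabla u=F$ this yields
\begin{equation*}
(2\beta-1)\!\int F(\nabla u)^2 u^{2\beta-2}\eta^2\,dx+2\!\int F(\nabla u)F_\xi(\nabla u)\cdot\nabla\eta\, u^{2\beta-1}\eta\,dx=\int u^{p+2\beta-1}\eta^2\,dx.
\end{equation*}
Next, I would plug $\phi=u^{\beta}\eta$ into the stability inequality (\ref{1.5}); invoking the identities above gives
\begin{equation*}
\beta^2\!\int F^2 u^{2\beta-2}\eta^2\,dx+2\beta\!\int F F_\xi\cdot\nabla\eta\, u^{2\beta-1}\eta\,dx+\!\int A_{ij}\eta_i\eta_j\, u^{2\beta}\,dx\ge p\!\int u^{p+2\beta-1}\eta^2\,dx.
\end{equation*}
Multiplying the first relation by $\beta^2/(2\beta-1)$ and subtracting it from the stability estimate eliminates $\int F^2 u^{2\beta-2}\eta^2\,dx$ and leaves the coefficient $p-\beta^2/(2\beta-1)$ in front of $\int u^{p+2\beta-1}\eta^2\,dx$; this is strictly positive exactly on $\beta\in\bigl(p-\sqrt{p^2-p},\,p+\sqrt{p^2-p}\bigr)$.

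The residual cross term $\int F F_\xi\cdot\nabla\eta\, u^{2\beta-1}\eta$ and the anisotropic piece $\int F(\nabla u)\, u^{2\beta}|\nabla\eta|^2$ coming out of $A_{ij}\eta_i\eta_j$ should then be absorbed by Young's inequality, with the small multiple of $\int F^2 u^{2\beta-2}\eta^2$ that is generated recycled back through the equation identity. Choosing $\eta=\tilde\eta^m$ with $m$ large guarantees that every cutoff quotient of the form $|\nabla\eta|^k/\eta^{k-1}$ is $O(R^{-1})$ on its support, and a Hölder step with conjugate exponents $\bigl(\tfrac{p+2\beta-1}{2\beta+2},\tfrac{p+2\beta-1}{p-3}\bigr)$ (legitimate precisely because $p>3$) then produces
$$\int_{B_R} u^{p+2\beta-1}\,dx\le C\, R^{\,N-\frac{2(p+2\beta-1)}{p-1}}.$$
Letting $\beta\uparrow p+\sqrt{p^2-p}$ makes the exponent $N-\tfrac{6p+4\sqrt{p^2-p}-2}{p-1}$, which is negative by hypothesis, so sending $R\to\infty$ forces $u\equiv 0$, contradicting positivity. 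Case (ii) will run in parallel, with $\phi=u^{-\beta}\eta$ in stability and $u^{-2\beta-1}\eta^2$ in the equation: the combined coefficient is now $p-\beta^2/(2\beta+1)$, positive on $\beta\in(p-\sqrt{p^2+p},\,p+\sqrt{p^2+p})$, and the Hölder step produces the $R$-exponent $N-\tfrac{2(p+2\beta+1)}{p+1}$, which pushed to the upper endpoint of $\beta$ yields the stated threshold.

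The main obstacle I anticipate is the extra anisotropic term $\int F(\nabla u)\, u^{2\beta}|\nabla\eta|^2$, which has no classical counterpart (for $-\Delta$ one has $A_{ij}\eta_i\eta_j\le C|\nabla\eta|^2$ directly) and must be absorbed \emph{without} enlarging the admissible range of $\beta$, since any such enlargement would degrade the dimension threshold. The remedy is the Young-inequality absorption combined with the $\eta=\tilde\eta^m$ device, so that the coefficient $p-\beta^2/(2\beta\pm1)$ remains the sole arbiter of the sharp range and the Finsler thresholds coincide exactly with Farina's.
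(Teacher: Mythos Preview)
Your proposal is correct and follows essentially the same route as the paper: Proposition~\ref{pro3}(ii)--(iii) carries out exactly the dual testing you describe (with $a_k(t)=t^{\pm\alpha}$, $b_k(t)=t^{\pm(2\alpha\mp1)}$ as truncated versions of your $u^{\beta}$, $u^{2\beta-1}$), handles the anisotropic remainder $\int F(\nabla u)\,u^{2\alpha}|\nabla\phi|^{2}$ via the same Young-inequality absorption you flag as the main obstacle (see (2.11)), and then the proof of Theorem~\ref{thm1.5} concludes by the identical scaling $R^{N-2(2\alpha+p\mp1)/(p\mp1)}\to0$. The only cosmetic difference is that the paper inserts the truncations $a_k,b_k$ to justify the test functions rigorously before passing to the limit via Fatou, a step you should add when writing the proof out.
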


When $F(\xi)=|\xi|$, the above result is given by  Farina in \cite{F} and Esposito et al. in \cite{egg, zz} for Part (i)  and Part (ii), respectively.

This article is organized as  follows. In Section \ref{Sec-IE}, we first recall a well-known sharp anisotropic Hardy's inequality.  Then, we prove certain integral estimates using Moser iteration arguments. All of these inequalities are essential tools in next sections. In Section \ref{Sec-PR}, we prove the partial regularity result, i.e.,  Theorem \ref{thm5}. In Section \ref{Sec-LT}, we prove  Liouville theorems for stable solutions and for finite Moser index solutions, i.e.,  Theorem \ref{thm1}, Theorem \ref{thm3} and Theorem \ref{thm1.5}. In addition, we also show the existence of the finite Moser index solutions. In the last Section \ref{Sec-MF}, we discuss monotonicity formulas which are of independent interests in this context.

\section{ Integral Estimates }\label{Sec-IE}
In this section, we provide some essential elliptic estimates and inequalities needed to establish our main results. We start with the following sharp anisotropic Hardy inequality, given in \cite{MST}.

\begin{pro}\label{pro2}
Assume $1\leq s<N$ or $s>N$, let $\Omega$ be a domain in $\mathbb{R}^{N}$. Then the following inequality
$$\left|\frac{N-s}{s}\right|^{s}\int_{\Omega}\frac{|\varphi|^{s}}{(F^{0}(x))^{s}}dx\leq\int_{\Omega}\left|\frac{x}{F^{0}(x)}\cdot\nabla\varphi\right|^{s}dx$$
holds true for any $\varphi\in C_{c}^{\infty}(\Omega)$ if $1\leq s<N$, and for any $\varphi\in C_{c}^{\infty}(\Omega\setminus\{0\})$ if $s>N$.
\end{pro}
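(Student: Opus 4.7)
The plan is to establish the inequality by testing the divergence of a carefully chosen vector field against $|\varphi|^s$ and then applying Hölder's inequality, in direct analogy with the classical proof of the Euclidean Hardy inequality. The anisotropic structure enters only through one identity from the ``Theorem A'' properties listed at the start of the paper, namely $\langle x,\nabla F^0(x)\rangle = F^0(x)$.

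\textbf{Choice of vector field.} I would take
$$V(x) := \frac{x}{F^0(x)^s},$$
which reduces to the familiar $x/|x|^s$ in the isotropic case. A direct computation using $\operatorname{div}(x)=N$ together with property (3) gives
$$\operatorname{div} V(x) = \frac{N}{F^0(x)^s} - s\,\frac{\langle x,\nabla F^0(x)\rangle}{F^0(x)^{s+1}} = \frac{N-s}{F^0(x)^s}.$$

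\textbf{Integration by parts and Hölder.} Multiplying $\operatorname{div} V$ by $|\varphi|^s$ and integrating by parts (with $\varphi\in C_c^\infty(\Omega)$ if $s<N$, or $\varphi\in C_c^\infty(\Omega\setminus\{0\})$ if $s>N$, so that no boundary or origin contributions appear) yields
$$(N-s)\int_\Omega \frac{|\varphi|^s}{F^0(x)^s}\,dx = -s\int_\Omega \frac{|\varphi|^{s-1}\operatorname{sgn}(\varphi)}{F^0(x)^{s-1}}\left(\frac{x}{F^0(x)}\cdot\nabla\varphi\right)dx.$$
Taking absolute values on the right and invoking Hölder's inequality with conjugate exponents $s/(s-1)$ and $s$ produces
$$|N-s|\int_\Omega\frac{|\varphi|^s}{F^0(x)^s}\,dx \leq s\left(\int_\Omega\frac{|\varphi|^s}{F^0(x)^s}\,dx\right)^{\!(s-1)/s}\!\left(\int_\Omega\left|\frac{x}{F^0(x)}\cdot\nabla\varphi\right|^s dx\right)^{\!1/s}\!.$$

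\textbf{Conclusion.} Dividing both sides by the common factor (the inequality being trivial when it vanishes) and raising to the $s$-th power produces the claimed constant $|(N-s)/s|^s$. The main obstacle, and really the only non-routine point, is identifying $V(x)=x/F^0(x)^s$ as the correct anisotropic replacement for the radial field: once property (3) of $F^0$ is recognized as the exact analogue of the identity $x\cdot\nabla|x|=|x|$, the divergence collapses to an isotropic-looking expression and the rest of the proof is a direct transcription of the standard Hardy argument. If sharpness of the constant is needed (it is not used in what follows, but is asserted by the proposition), one would verify it by testing against $\varphi(x)=F^0(x)^{-(N-s)/s}$ suitably truncated near $0$ and near infinity, with the optimizing role of this profile explained exactly by the identity $\operatorname{div} V = (N-s)/F^0(x)^s$ derived above.
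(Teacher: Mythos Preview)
Your proof is correct and follows the standard divergence/Hölder argument for Hardy-type inequalities; the only anisotropic input is precisely the Euler identity $\langle x,\nabla F^0(x)\rangle=F^0(x)$ from property~(3), which makes the divergence of $V(x)=x/F^0(x)^s$ collapse to $(N-s)/F^0(x)^s$, after which everything is routine.

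Note, however, that the paper does not actually prove Proposition~\ref{pro2}: it is quoted without proof from \cite{MST} (Mercaldo--Sano--Takahashi, \emph{Finsler Hardy inequalities}). So there is no ``paper's own proof'' to compare against. Your argument is essentially the one given in \cite{MST} for this particular inequality, and it is complete as written; the remark on sharpness via the profile $F^0(x)^{-(N-s)/s}$ is also the standard one, and indeed is not used anywhere later in the paper (only the case $s=2$ of the inequality itself is invoked, in the proof of Theorem~\ref{thm1.6}).
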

We now prove some integral estimates for stable solutions.  The methods and ideas are inspired by Moser iteration arguments given in \cite{5,F,CR} and references therein.
\begin{pro}\label{pro3}
Assume that $N\geq2$ and $\Omega$ is a domain (possibly unbounded) of $\mathbb{R}^{N}$. Let $u$ be a stable weak solution of (\ref{1.1}). 
\begin{enumerate}
\item[(i)] If $f(u)=e^{u}$ then for any integer $m\geq10$ and any $\alpha\in(0,4)$, we have
\begin{equation}\label{2.2}
\int_{\Omega}e^{(\alpha+1)u}\psi^{2m}dx\leq C\int_{\Omega}\left(|\nabla\psi|^{2}+|\nabla\psi|^{4}\right)^{\alpha+1}dx . 
\end{equation}

\item[(ii)] 
If $f(u)=u^{p}$ with $p>3$, then for any integer $m\geq\frac{4\sqrt{p^{2}-p}+6p-2}{p-3}$ and $p-\sqrt{p^{2}-p}<\alpha<p+\sqrt{p^{2}-p}$, we have
\begin{align}\label{2.3}
\int_{\Omega}u^{p+2\alpha-1}\psi^{2m}dx\leq C\int_{\Omega}\left(|\nabla\psi|^{\frac{2}{p-1}}+|\nabla\psi|^{\frac{4}{p-3}}\right)^{2\alpha+p-1}dx . 
\end{align}

\item [(iii)] 
If $f(u)=-u^{-p}$ with $p>\frac{1}{3}$, then for any integer $m\geq\max\{\frac{3p+2\sqrt{p^{2}+p}+1}{p+1},\frac{6p+4\sqrt{p^{2}+p}+2}{p+3}\}$ and $1<\alpha<p+\sqrt{p^{2}+p}$, we have
\begin{align}\label{2.5}
\int_{\Omega}u^{-2\alpha-p-1}\psi^{2m}dx\leq C\int_{\Omega}\left(|\nabla\psi|^{\frac{2}{p+1}}+|\nabla\psi|^{\frac{4}{p+3}}\right)^{2\alpha+p+1}dx .
\end{align}
\end{enumerate}
Here, $\psi$ is a test function $\psi\in C_{c}^{1}(\Omega)$ satisfying $0\leq\psi\leq1$ in $\Omega$.

\end{pro}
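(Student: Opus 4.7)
The three parts follow a common Moser-iteration template, so the plan is to prove (i) in detail and then indicate the analogous substitutions for (ii) and (iii). The four steps are: insert a multiplicative test function $\phi = g(u)\psi^{m}$ into the stability inequality (\ref{1.5}); simplify the anisotropic quadratic form $F_{\xi_i}F_{\xi_j}+FF_{\xi_i\xi_j}$ using properties (3)--(4) and the uniform bound from (\ref{1.2})--(\ref{1.3}); use the weak equation with an auxiliary test function $\eta$ to produce an identity that eliminates the resulting $\int F^{2}(\nabla u)\cdots\,dx$ integral; and finally close the estimate with one or two applications of Young's inequality.

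For part (i), taking $\phi = e^{\alpha u/2}\psi^{m}$ and expanding $\phi_{x_i}\phi_{x_j}$ produces three pieces. Using $F_{\xi}(\nabla u)\cdot\nabla u = F(\nabla u)$ from (3) and $F_{\xi_i\xi_j}(\nabla u)\,u_{x_j}=0$ from (4), the $\nabla u\otimes\nabla u$ piece collapses to $\tfrac{\alpha^{2}}{4}F^{2}(\nabla u)e^{\alpha u}\psi^{2m}$ and the $\nabla u\otimes\nabla\psi$ cross term reduces to $\alpha m\,F(\nabla u)F_{\xi}(\nabla u)\cdot\nabla\psi\,e^{\alpha u}\psi^{2m-1}$. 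The $\nabla\psi\otimes\nabla\psi$ piece is bounded by $Cm^{2}|\nabla\psi|^{2}e^{\alpha u}\psi^{2m-2}$ because $F_{\xi_i}F_{\xi_j}+FF_{\xi_i\xi_j}$ is the Hessian of the $C^{2}$, $2$-homogeneous function $\tfrac{1}{2}F^{2}$ and is therefore uniformly bounded. Next, testing the weak equation with $\eta = e^{\alpha u}\psi^{2m}$ and using property (3) once more gives the identity
\[
\alpha\!\int F^{2}(\nabla u)e^{\alpha u}\psi^{2m}\,dx + 2m\!\int F(\nabla u)F_{\xi}(\nabla u)\cdot\nabla\psi\,e^{\alpha u}\psi^{2m-1}\,dx = \int e^{(\alpha+1)u}\psi^{2m}\,dx.
\]
Multiplying this identity by $\alpha/4$ and substituting into the stability estimate yields
\[
\Bigl(1-\tfrac{\alpha}{4}\Bigr)\!\int e^{(\alpha+1)u}\psi^{2m}\,dx \leq \tfrac{\alpha m}{2}\!\int F(\nabla u)F_{\xi}\cdot\nabla\psi\,e^{\alpha u}\psi^{2m-1}\,dx + Cm^{2}\!\int e^{\alpha u}|\nabla\psi|^{2}\psi^{2m-2}\,dx,
\]
and the hypothesis $\alpha\in(0,4)$ keeps the coefficient $1-\alpha/4$ strictly positive.

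The closing step dispatches the two remaining integrals. The second is handled by Young's with conjugate exponents $\tfrac{\alpha+1}{\alpha}$ and $\alpha+1$, yielding $e^{\alpha u}|\nabla\psi|^{2}\psi^{2m-2}\leq \varepsilon e^{(\alpha+1)u}\psi^{2m} + C_{\varepsilon}|\nabla\psi|^{2(\alpha+1)}\psi^{2(m-\alpha-1)}$; since $m\geq 10 > \alpha+1$ one can drop $\psi^{2(m-\alpha-1)}\leq 1$. For the first integral, a Young's of the form $F\,|\nabla\psi| \leq \delta F^{2}+\delta^{-1}|\nabla\psi|^{2}$ produces an $F^{2}$-term (reabsorbed using the equation-identity once more) and an $|\nabla\psi|^{2}$-term; a further Young's step with exponents $\tfrac{\alpha+1}{\alpha},\alpha+1$ on the residual $F^{\alpha+1}|\nabla\psi|^{\alpha+1}$-type factor is responsible for the $|\nabla\psi|^{4(\alpha+1)}$ contribution and hence for the $|\nabla\psi|^{4}$ appearing in (\ref{2.2}). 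Absorbing the $\varepsilon$-term on the left finishes (i). Parts (ii) and (iii) follow the same pattern with $\phi = u^{\alpha}\psi^{m}$ (resp.\ $u^{-\alpha}\psi^{m}$) and $\eta = u^{2\alpha-1}\psi^{2m}$ (resp.\ $u^{-2\alpha-1}\psi^{2m}$); the absorbability condition becomes $\alpha^{2} < p(2\alpha\mp 1)$, equivalent to $(\alpha-p)^{2}<p^{2}\mp p$, giving the ranges $p-\sqrt{p^{2}\mp p}<\alpha<p+\sqrt{p^{2}\mp p}$ stated in the proposition. The explicit lower bounds on $m$ are precisely what is required so that every surviving power of $\psi$ in the two Young's steps is non-negative and can then be discarded using $0\leq\psi\leq 1$.

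The principal technical obstacle is coordinating the two Young's inequalities in the last step: the small parameters $\varepsilon$ and $\delta$ must be chosen so that both the leading $\int e^{(\alpha+1)u}\psi^{2m}$ contribution and the auxiliary $\int F^{2}(\nabla u)e^{\alpha u}\psi^{2m}$ contribution can be absorbed simultaneously into the left-hand side. Apart from this bookkeeping, the argument is an essentially algorithmic anisotropic rewriting of the Moser iterations of Crandall--Rabinowitz \cite{CR} and Farina \cite{5,F}, with the identities in properties (3)--(4) playing the role of the elementary chain rule for the classical Laplacian.
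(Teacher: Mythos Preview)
Your approach is essentially the paper's: test the equation with $e^{\alpha u}\phi^{2}$, plug $e^{\alpha u/2}\phi$ into the stability inequality, combine, set $\phi=\psi^{m}$, and close with Young. One technical omission: the paper works throughout with truncated functions $a_{k},b_{k}$ (affine beyond the level $k$) in place of $e^{\alpha u/2},e^{\alpha u}$, and only lets $k\to\infty$ via Fatou at the very end. You need some such device, because a priori you only know $u\in H^{1}_{loc}$ and $e^{u}\in L^{1}_{loc}$; nothing tells you that $e^{\alpha u/2}\psi^{m}$ lies in the class admissible for (\ref{1.5}), nor that $\int F^{2}(\nabla u)e^{\alpha u}\psi^{2m}$ is finite, so neither side of your ``equation-identity'' is yet known to make sense. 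The same remark applies to $u^{\alpha}\psi^{m}$ and $u^{-\alpha}\psi^{m}$ in parts (ii)--(iii).

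One genuine simplification on your side: bounding the pure $\nabla\psi\otimes\nabla\psi$ piece by observing that $F_{\xi_{i}}F_{\xi_{j}}+FF_{\xi_{i}\xi_{j}}=\partial_{ij}(\tfrac{1}{2}F^{2})$ is $0$-homogeneous and therefore uniformly bounded is cleaner than the paper's route. The paper instead estimates $\int FF_{\xi_{i}\xi_{j}}(a_{k})^{2}\phi_{x_{i}}\phi_{x_{j}}\leq\Lambda\int F(\nabla u)(a_{k})^{2}|\nabla\phi|^{2}$, retaining a stray factor of $F(\nabla u)$, and then removes it by a Cauchy split that generates the term $|\nabla\phi|^{4}/\phi^{2}$; this is the sole source of the $|\nabla\psi|^{4}$ in (\ref{2.2}). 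In your computation that extra factor of $F$ never appears, so after absorbing the cross term the closing step needs only the single Young split on $e^{\alpha u}|\nabla\psi|^{2}\psi^{2m-2}$ and in fact yields the sharper bound $\int e^{(\alpha+1)u}\psi^{2m}\leq C\int|\nabla\psi|^{2(\alpha+1)}$. Your hand-wave about a ``further Young's step on the residual $F^{\alpha+1}|\nabla\psi|^{\alpha+1}$-type factor'' does not correspond to anything that actually arises in your own calculation and should be dropped.
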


\begin{proof}
$(i)$ If $f(u)=e^{u}$, for any $\alpha\in(0,4)$ and any $k>0$, we set
$$ a_{k}(t)=\left\{
\begin{aligned}
e^{\frac{\alpha t}{2}}, &  & \indent\mbox{if}\indent t<k \\
[\frac{\alpha}{2}(t-k)+1]e^{\frac{\alpha k}{2}}, & & \indent\mbox{if}\indent t\geq k. \\
\end{aligned}
\right.
$$
and
$$ b_{k}(t)=\left\{
\begin{aligned}
e^{\alpha t}, &  & \indent\mbox{if}\indent t<k \\
[\alpha(t-k)+1]e^{\alpha k}, & & \indent\mbox{if}\indent t\geq k. \\
\end{aligned}
\right.
$$
Simple calculations yields
\begin{align}\label{2.6}
a_{k}^{2}(t)\geq b_{k}(t),\indent (a'_{k}(t))^{2}=\frac{\alpha}{4}b_{k}'(t) ,
\end{align}
and
\begin{align}\label{2.7}
(a_{k}'(t))^{-2}(a_{k}(t))^{4}\leq c_{1}e^{\alpha t},\indent (a_{k}(t))^{2}\leq e^{\alpha t}, \indent (b_{k}'(t))^{-1}(b_{k}(t))^{2}\leq c_{2}e^{\alpha t},
\end{align}
for some positive constant $c_{1}$ and $c_{2}$ which depends only on $\alpha$. For any $\phi\in C_{c}^{1}(\Omega)$, take $b_{k}(u)\phi^{2}$ as the test function, multiply (\ref{1.1}) and integrate by parts, it follows from Theorem \myref{A}, we have
\begin{align}
&\int_{\Omega}-div(F(\nabla u)F_{\xi}(\nabla u))b_{k}(u)\phi^{2}dx\nonumber\\
&=\int_{\Omega}-\frac{\partial}{\partial x_{i}}(F(\nabla u)F_{\xi_{i}}(\nabla u))b_{k}(u)\phi^{2}dx\nonumber\\
&=\int_{\Omega}F(\nabla u)F_{\xi_{i}}(\nabla u)b_{k}'(u)u_{x_{i}}\phi^{2}+F(\nabla u)F_{\xi_{i}}(\nabla u)b_{k}(u)2\phi\phi_{x_{i}}dx\nonumber\\
&=\int_{\Omega}F^{2}(\nabla u)b_{k}'(u)\phi^{2}+F(\nabla u)F_{\xi_{i}}(\nabla u)b_{k}(u)2\phi\phi_{x_{i}}dx\nonumber\\
&=\int_{\Omega}e^{u}b_{k}(u)\phi^{2}dx . \nonumber
\end{align}
It follows that
\begin{align}
\int_{\Omega}F^{2}(\nabla u)b_{k}'(u)\phi^{2}dx \leq 2C\int_{\Omega}F(\nabla u)b_{k}(u)|\phi||\nabla\phi|dx+\int_{\Omega}e^{u}b_{k}(u)\phi^{2}dx,\nonumber
\end{align}
by the Cauchy inequality, we have
 \begin{align}\label{2.8}
\int_{\Omega}F^{2}(\nabla u)b_{k}'(u)\phi^{2}dx \leq \frac{2C}{(1-2C\varepsilon)\varepsilon}\int_{\Omega}(b_{k}'(u))^{-1}b_{k}^{2}(u)|\nabla\phi|^{2}dx+\frac{1}{1-2C\varepsilon}\int_{\Omega}e^{u}b_{k}(u)\phi^{2}dx.
\end{align}
Since $u$ is stable solution of equation (\ref{1.1}), hence, for any $\varphi\in C_{c}^{1}(\Omega)$ we have
\begin{align}\label{+2.8}
\int_{\Omega}F_{\xi_{i}}(\nabla u)F_{\xi_{j}}(\nabla u)\varphi_{x_{i}}\varphi_{x_{j}}+F(\nabla u)F_{\xi_{i}\xi_{j}}(\nabla u)\varphi_{x_{i}}\varphi_{x_{j}}-e^{u}\varphi^{2}dx\geq0.
\end{align}\label{2.9}
Take $\varphi=a_{k}(u)\phi$, easy to check $\varphi_{x_{i}}=a_{k}'(u)u_{x_{i}}\phi+a_{k}(u)\phi_{x_{i}}$, using Theorem \myref{A} and Cauchy inequality, we have
\begin{align}\label{2.10}
&\int_{\Omega}F_{\xi_{i}}(\nabla u)F_{\xi_{j}}(\nabla u)\varphi_{x_{i}}\varphi_{x_{j}}dx\nonumber\\
&=\int_{\Omega}F_{\xi_{i}}(\nabla u)F_{\xi_{j}}(\nabla u)(a_{k}'(u)u_{x_{i}}\phi+a_{k}(u)\phi_{x_{i}})(a_{k}'(u)u_{x_{j}}\phi+a_{k}(u)\phi_{x_{j}})dx\nonumber\\
&\leq\int_{\Omega}(1+2C\varepsilon_{1})F^{2}(\nabla u)(a_{k}'(u))^{2}\phi^{2}+(C^{2}+\frac{2C}{\varepsilon_{1}})(a_{k}(u))^{2}|\nabla\phi|^{2}dx,
\end{align}
and
\begin{align}\label{2.11}
&\int_{\Omega}F(\nabla u)F_{\xi_{i}\xi_{j}}(\nabla u)\varphi_{x_{i}}\varphi_{x_{j}}dx\nonumber\\
&=\int_{\Omega}F(\nabla u)F_{\xi_{i}\xi_{j}}(\nabla u)(a_{k}'(u)u_{x_{i}}\phi+a_{k}(u)\phi_{x_{i}})(a_{k}'(u)u_{x_{j}}\phi+a_{k}(u)\phi_{x_{j}})dx\nonumber\\
&=\int_{\Omega}F(\nabla u)F_{\xi_{i}\xi_{j}}(a_{k}(u))^{2}\phi_{x_{i}}\phi_{x_{j}}dx\leq\Lambda\int_{\Omega}F(\nabla u)(a_{k}(u))^{2}|\nabla\phi|^{2}dx\nonumber\\
&\leq\Lambda\varepsilon_{2}\int_{\Omega}F^{2}(\nabla u)(a_{k}'(u))^{2}\phi^{2}dx+\frac{\Lambda}{\varepsilon_{2}}\int_{\Omega}(a_{k}'(u))^{-2}(a_{k}(u))^{4}\frac{|\nabla\phi|^{4}}{\phi^{2}}dx.
\end{align}
Combine (\ref{2.6}), (\ref{2.7}), (\ref{2.8}), (\ref{+2.8}), (\ref{2.10}) and (\ref{2.11}) we obtain
\begin{align}
\int_{\Omega}e^{u}(a_{k}(u))^{2}\phi^{2}dx\nonumber
&\leq\frac{\alpha(1+2C\varepsilon_{1}+\Lambda\varepsilon_{2})}{4(1-2C\varepsilon)}\int_{\Omega}e^{u}(a_{k}(u))^{2}\phi^{2}dx+\frac{\Lambda}{\varepsilon_{2}}c_{1}\int_{\Omega}e^{\alpha u}\frac{|\nabla\phi|^{4}}{\phi^{2}}dx\nonumber\\
&\indent+\left[\frac{2C\alpha(1+2C\varepsilon_{1}+\Lambda\varepsilon_{2})}{4(1-2C\varepsilon)\varepsilon}c_{2}+C^{2}+\frac{2C}{\varepsilon_{1}}\right]\int_{\Omega}e^{\alpha u}|\nabla\phi|^{2}dx . \nonumber
\end{align}
Since $\alpha\in(0,4)$, so we can choose $\varepsilon$, $\varepsilon_{1}$ and $\varepsilon_{2}$ small enough, such that
$$\frac{\alpha(1+2C\varepsilon_{1}+\Lambda\varepsilon_{2})}{4(1-2C\varepsilon)}<1 . $$
Hence, we have
$$\int_{\Omega}e^{u}(a_{k}(u))^{2}\phi^{2}dx\leq C_{1}\int_{\Omega}e^{\alpha u}\frac{|\nabla\phi|^{4}}{\phi^{2}}dx+C_{2}\int_{\Omega}e^{\alpha u}|\nabla\phi|^{2}dx,$$
where $C_{1}$ and $C_{2}$ are positive constants and independent of $k$. Then let $k\rightarrow+\infty$, by Fatou's lemma, we have
$$\int_{\Omega}e^{(\alpha+1)u}\phi^{2}dx\leq C_{1}\int_{\Omega}e^{\alpha u}\frac{|\nabla\phi|^{4}}{\phi^{2}}dx+C_{2}\int_{\Omega}e^{\alpha u}|\nabla\phi|^{2}dx . $$
Let $\phi=\psi^{m}$ and $0\leq\psi\leq1$, by young's inequality, we have
\begin{align}
&\int_{\Omega}e^{(\alpha+1)u}\phi^{2}dx=\int_{\Omega}e^{(\alpha+1)u}\psi^{2m}dx\nonumber\\
&\leq \widetilde{C_{1}}\varepsilon\int_{\Omega}e^{(\alpha+1)u}\psi^{2m}dx+\frac{\widetilde{C_{1}}}{\varepsilon}\int_{\Omega}\left(|\psi|^{2m-2-2m\frac{\alpha}{\alpha+1}}|\nabla\psi|^{2}\right)^{\alpha+1}dx\nonumber\\
&\indent+\widetilde{C_{2}}\varepsilon\int_{\Omega}e^{(\alpha+1)u}\psi^{2m}dx+\frac{\widetilde{C_{2}}}{\varepsilon}\int_{\Omega}\left(|\psi|^{2m-4-2m\frac{\alpha}{\alpha+1}}|\nabla\psi|^{4}\right)^{\alpha+1}dx . \nonumber
\end{align}
Since $m\geq10$, we have $2m-4-2m\frac{\alpha}{\alpha+1}\geq0$ and we can choose $\varepsilon$ small such that
$$\int_{\Omega}e^{(\alpha+1)u}\psi^{2m}dx\leq C\int_{\Omega}(|\nabla\psi|^{2}+|\nabla\psi|^{4})^{\alpha+1}dx . $$
This completes the proof of (\ref{2.2}).

$(ii)$ If $f(u)=u^{p}$, we use the same method, for any $\alpha\in(p-\sqrt{p^{2}-p}, p+\sqrt{p^{2}-p})$ and any $k>0$, set
$$ a_{k}(t)=\left\{
\begin{aligned}
t^{\alpha}, &  & \indent\mbox{if}\indent t<k \\
\left[\frac{\alpha^{2}}{(2\alpha-1)k}(t-k)+1\right]k^{\alpha}, & & \indent\mbox{if}\indent t\geq k, \\
\end{aligned}
\right.
$$
and
$$ b_{k}(t)=\left\{
\begin{aligned}
t^{2\alpha-1}, &  & \indent\mbox{if}\indent t<k \\
\left[\frac{\alpha^{2}}{(2\alpha-1)k}(t-k)+1\right]k^{2\alpha-1}, & & \indent\mbox{if}\indent t\geq k, \\
\end{aligned}
\right.
$$
easy to check
\begin{align}\label{2.12}
(a_{k}'(t))^{2}=\frac{\alpha^{2}}{2\alpha-1}b_{k}'(t) \indent\mbox{and}\indent (a_{k}(t))^{2}\geq tb_{k}(t),
\end{align}
and
\begin{align}\label{2.13}
(a_{k}'(t))^{-2}(a_{k}(t))^{4}\leq C_{3}t^{2\alpha+2},\indent (a_{k}(t))^{2}\leq C_{4}t^{2\alpha} \indent\mbox{and}\indent (b_{k}'(t))^{-1}(b_{k}(t))^{2}\leq C_{5}t^{2\alpha},
\end{align}
where $C_{3}$, $C_{4}$ and $C_{5}$ are positive constant and independent of $k$. For any $\phi\in C_{c}^{1}(\Omega)$, take $b_{k}(u)\phi^{2}$ as the test function, multiple (\ref{1.1}) and integrate by parts, it follows from Theorem \myref{A}, we have
\begin{align}\label{2.14}
&\int_{\Omega}F^{2}(\nabla u)b_{k}'(u)\phi^{2}dx\nonumber\\
&\leq\frac{2C}{(1-2C\varepsilon)\varepsilon}\int_{\Omega}(b_{k}'(u))^{-1}(b_{k}(u))^{2}|\nabla\phi|^{2}dx+\frac{1}{1-2C\varepsilon}\int_{\Omega}u^{p}b_{k}(u)\phi^{2}dx.
\end{align}
Since $u$ is a stable solution of equation (\ref{1.1}), hence for any $\varphi\in C_{c}^{1}(\Omega)$, holds
\begin{align}\label{+2.14}
\int_{\Omega}F_{\xi_{i}}(\nabla u)F_{\xi_{j}}(\nabla u)\varphi_{x_{i}}\varphi_{x_{j}}+F(\nabla u)F_{\xi_{i}\xi_{j}}(\nabla u)\varphi_{x_{i}}\varphi_{x_{j}}-pu^{p-1}\varphi^{2}dx\geq0,
\end{align}
take $\varphi=a_{k}(u)\phi$, easy to see $\varphi_{x_{i}}=a_{k}'(u)u_{x_{i}}\phi+a_{k}(u)\phi_{x_{i}}$, by Theorem \myref{A} and Cauchy inequality, we have
\begin{align}\label{+2.15}
&p\int_{\Omega}u^{p-1}(a_{k}(u))^{2}\phi^{2}dx\nonumber\\
&\leq(1+2C\varepsilon_{1}+\Lambda\varepsilon_{2})\int_{\Omega}F^{2}(\nabla u)(a_{k}'(u))^{2}\phi^{2}dx\nonumber\\
&\indent+(C^{2}+\frac{2C}{\varepsilon_{1}})\int_{\Omega}(a_{k}(u))^{2}|\nabla\phi|^{2}dx+\frac{\Lambda}{\varepsilon_{2}}\int_{\Omega}(a_{k}'(u))^{-2}(a_{k}(u))^{4}\frac{|\nabla\phi|^{4}}{\phi^{2}}dx . 
\end{align}
It follows from (\ref{2.12}), (\ref{2.13}), (\ref{2.14}), (\ref{+2.14}) and (\ref{+2.15}) that
\begin{align}
&p\int_{\Omega}u^{p-1}(a_{k}(u))^{2}\phi^{2}dx\nonumber\\
&\leq\frac{\alpha^{2}(1+2C\varepsilon_{1}+\Lambda\varepsilon_{2})}{(2\alpha-1)(1-2C\varepsilon)}\int_{\Omega}u^{p-1}(a_{k}(u))^{2}\phi^{2}dx+\frac{\Lambda}{\varepsilon_{2}}C_{3}\int_{\Omega}u^{2\alpha+2}\frac{|\nabla\phi|^{4}}{\phi^{2}}dx\nonumber\\
&\indent+\left[\frac{2C\alpha(1+2C\varepsilon_{1}+\Lambda\varepsilon_{2})}{(2\alpha-1)(1-2C\varepsilon)\varepsilon}C_{5}+(C ^{2}+\frac{2C}{\varepsilon_{1}})C_{4}\right]\int_{\Omega}u^{2\alpha}|\nabla\phi|^{2}dx . \nonumber
\end{align}
Since $p-\sqrt{p^{2}-p}<\alpha<p+\sqrt{p^{2}-p}$, we can choose $\varepsilon$, $\varepsilon_{1}$ and $\varepsilon_{2}$ small enough such that
$$p>\frac{\alpha^{2}(1+2C\varepsilon_{1}+\Lambda\varepsilon_{2})}{(2\alpha-1)(1-2C\varepsilon)} . $$
It follows that
$$\int_{\Omega}u^{p-1}(a_{k}(u))^{2}\phi^{2}dx\leq C_{6}\int_{\Omega}u^{2\alpha}|\nabla\phi|^{2}dx+C_{7}\int_{\Omega}u^{2\alpha+2}\frac{|\nabla\phi|^{4}}{\phi^{2}}dx,$$
where $C_{5}$, $C_{6}$ are positive constant and independent of $k$, so let $k\rightarrow+\infty$, by Fatou's lemma we have
$$\int_{\Omega}u^{2\alpha+p-1}\phi^{2}dx\leq C_{6}\int_{\Omega}u^{2\alpha}|\nabla\phi|^{2}dx+C_{7}\int_{\Omega}u^{2\alpha+2}\frac{|\nabla\phi|^{4}}{\phi^{2}}dx . $$
Since $p>3$, let $\phi=\psi^{m}$ and $0\leq\psi\leq1$, by Young's inequality we have
\begin{align}
&\int_{\Omega}u^{2\alpha+p-1}\phi^{2}dx=\int_{\Omega}u^{2\alpha+p-1}\psi^{2m}dx\nonumber\\
&\leq\widetilde{C}_{6}\varepsilon\int_{\Omega}u^{2\alpha+p-1}\psi^{2m}dx+\frac{\widetilde{C}_{6}}{\varepsilon}\int_{\Omega}\left[|\nabla\psi|^{2}\psi^{2m-2-2m\frac{2\alpha}{2\alpha+p-1}}\right]^{\frac{2\alpha+p-1}{p-1}}dx\nonumber\\
&\indent+\widetilde{C}_{7}\varepsilon\int_{\Omega}u^{2\alpha+p-1}\psi^{2m}dx+\frac{\widetilde{C}_{7}}{\varepsilon}\int_{\Omega}\left[|\nabla\psi|^{4}\psi^{2m-4-2m\frac{2\alpha+2}{2\alpha+p-1}}\right]^{\frac{2\alpha+p-1}{p-3}}dx . \nonumber
\end{align}
Since $m\geq\frac{6p+4\sqrt{p^{2}-p}-2}{p-3}$, we have
$$2m-4-2m\frac{2\alpha+2}{2\alpha+p-1}\geq0,$$
This finishes the proof of (\ref{2.3}).

$(iii)$ If $f(u)=-u^{-p}$, for any $\alpha\in(1, p+\sqrt{p^{2}+p})$ and for any $k>0$, we set
$$ a_{k}(t)=\left\{
\begin{aligned}
t^{-\alpha}, &  & \indent\mbox{if}\indent \frac{1}{t}<\frac{1}{k} \\
\left[\frac{\alpha^{2}}{(2\alpha+1)k}(k-t)+1\right]k^{-\alpha}, & & \indent\mbox{if}\indent \frac{1}{t}\geq \frac{1}{k}, \\
\end{aligned}
\right.
$$
and
$$ b_{k}(t)=\left\{
\begin{aligned}
t^{-(2\alpha+1)}, &  & \indent\mbox{if}\indent \frac{1}{t}<\frac{1}{k} \\
\left[\frac{\alpha^{2}}{(2\alpha+1)k}(k-t)+1\right]k^{-(2\alpha+1)}, & & \indent\mbox{if}\indent \frac{1}{t}\geq \frac{1}{k}, \\
\end{aligned}
\right.
$$
easy to check
\begin{align}\label{2.15}
(a_{k}'(t))^{2}=\frac{\alpha^{2}}{2\alpha+1}|b_{k}'(t)|\indent\mbox{and}\indent (a_{k}(t))^{2}\geq tb_{k}(t),
\end{align}
and
\begin{align}\label{2.16}
|a_{k}'(t)|^{-2}(a_{k}(t))^{4}\leq c_{3}t^{-2\alpha+2}, \indent (a_{k}(t))^{2}\leq c_{4}t^{-2\alpha}\indent\mbox{and}\indent |b_{k}'(t)|^{-1}(b_{k}(t))^{2}\leq c_{5}t^{-2\alpha},
\end{align}
where $c_{3}$, $c_{4}$ and $c_{5}$ are positive constant and independent of $k$.
For any $\phi\in C_{c}^{1}(\Omega)$, take $b_{k}(u)\phi^{2}$ as the test function, multiple (\ref{1.1}) and integrate by parts, it follows from Theorem \myref{A}, we have
\begin{align}\label{2.17}
\int_{\Omega}F^{2}(\nabla u)|b_{k}'(u)|\phi^{2}dx \leq\frac{2C}{(1-2C\varepsilon)\varepsilon}\int_{\Omega}|b_{k}'(u)|^{-1}(b_{k}(u))^{2}|\nabla\phi|^{2}dx+\int_{\Omega}\frac{1}{1-2C\varepsilon}\int_{\Omega}u^{-p}b_{k}(u)\phi^{2}dx.
\end{align}
Since $u$ is stable solution of equation (\ref{1.1}), hence for any $\varphi\in C_{c}^{1}(\Omega)$, we have
\begin{align}\label{+2.17}
\int_{\Omega}F_{\xi_{i}}(\nabla u)F_{\xi_{j}}(\nabla u)\varphi_{x_{i}}\varphi_{x_{j}}+F(\nabla u)F_{\xi_{i}\xi_{j}}(\nabla u)\varphi_{x_{i}}\varphi_{x_{j}}-pu^{-p-1}\varphi^{2}dx\geq0 . 
\end{align}
Take $\varphi=a_{k}(u)\phi$, easy to check $\varphi_{x_{i}}=a_{k}'(u)u_{x_{i}}\phi+a_{k}(u)\phi_{x_{i}}$, by the Theorem \myref{A} and Cauchy inequality, we have
\begin{align}\label{2.18}
&p\int_{\Omega}u^{-p-1}(a_{k}(u))^{2}\phi^{2}dx\nonumber\\
&\leq(1+2C\varepsilon_{1}+\Lambda\varepsilon_{2})\int_{\Omega}F^{2}(\nabla u)(a_{k}'(u))^{2}\phi^{2}dx\nonumber\\
&\indent+(C^{2}+\frac{2C}{\varepsilon_{1}})\int_{\Omega}(a_{k}(u))^{2}|\nabla\phi|^{2}dx+\frac{\Lambda}{\varepsilon_{2}}\int_{\Omega}(a_{k}'(u))^{-2}(a_{k}(u))^{4}\frac{|\nabla\phi|^{4}}{\phi^{2}}dx . 
\end{align}
It follows from (\ref{2.15}), (\ref{2.16}), (\ref{2.17}), (\ref{+2.17}) and (\ref{2.18}) that
\begin{align}
&p\int_{\Omega}u^{-p-1}(a_{k}(u))^{2}\phi^{2}dx\nonumber\\
&\leq\left[\frac{2C\alpha^{2}(1+2C\varepsilon_{1}+\Lambda\varepsilon_{2})}{(2\alpha+1)(1-2C\varepsilon)\varepsilon}c_{5}+(C^{2}+\frac{2C}{\varepsilon_{1}})c_{4}\right]\int_{\Omega}u^{-2\alpha}|\nabla\phi|^{2}dx\nonumber\\
&\indent+\frac{\alpha^{2}(1+2C\varepsilon_{1}+\Lambda\varepsilon_{2})}{(2\alpha+1)(1-2C\varepsilon)}\int_{\Omega}u^{-p-1}(a_{k}(u))^{2}\phi^{2}dx+\frac{\Lambda}{\varepsilon_{2}}c_{3}\int_{\Omega}u^{-2\alpha+2}\frac{|\nabla\phi|^{4}}{\phi^{2}}dx . \nonumber
\end{align}
Since $\alpha\in(1,p+\sqrt{p^{2}+p})$, so we can choose $\varepsilon$, $\varepsilon_{1}$ and $\varepsilon_{2}$ small enough such that
$$p>\frac{\alpha^{2}(1+2C\varepsilon_{1}+\Lambda\varepsilon_{2})}{(2\alpha+1)(1-2C\varepsilon)} . $$
Hence, we have
$$\int_{\Omega}u^{-p-1}(a_{k}(u))^{2}\phi^{2}dx
\leq c_{6}\int_{\Omega}u^{-2\alpha}|\nabla\phi|^{2}dx
+c_{7}\int_{\Omega}u^{-2\alpha+2}\frac{|\nabla\phi|^{4}}{\phi^{2}}dx,$$
where $c_{6}$, $c_{7}$ are positive constants and independent of $k$, let $k\rightarrow+\infty$, by Fatou's lemma we have
$$\int_{\Omega}u^{-2\alpha-p-1}\phi^{2}dx
\leq c_{6}\int_{\Omega}u^{-2\alpha}|\nabla\phi|^{2}dx
+c_{7}\int_{\Omega}u^{-2\alpha+2}\frac{|\nabla\phi|^{4}}{\phi^{2}}dx . $$
Let $\phi=\psi^{m}$ and $0\leq\psi\leq1$, by Young's inequality, we have
\begin{align}
\int_{\Omega}u^{-2\alpha-p-1}\psi^{2m}dx
&\leq \widetilde{c}_{6}\int_{\Omega}(\psi^{2m-2-2m\frac{2\alpha}{p+1+2\alpha}}|\nabla\psi|^{2})^{\frac{p+1+2\alpha}{p+1}}dx\nonumber\\
&\indent+\widetilde{c}_{7}\int_{\Omega}(\psi^{2m-4-2m\frac{2\alpha-2}{p+1+2\alpha}}|\nabla\psi|^{4})^{\frac{p+1+2\alpha}{p+3}}dx . \nonumber
\end{align}
Since $m\geq\max\{\frac{3p+2\sqrt{p^{2}+p}+1}{p+1},\frac{6p+4\sqrt{p^{2}+p}+2}{p+3}\}$, so we have
$$2m-2-2m\frac{2\alpha}{p+1+2\alpha}\geq0,$$
and
$$2m-4-2m\frac{2\alpha-2}{p+1+2\alpha}\geq0,$$
This completes the proof of  (\ref{2.5}).
\end{proof}

\section{ The Partial Regularity Result  }\label{Sec-PR}

To prove our regularity theorem, the level set method plays an important role, see \cite{RW,WX12}, in order to use this method, let us first recall the important tools: the co-area formula and isoperimetric inequality for anisotropic version.  We define the total variation of $u\in BV(\Omega)$ with respect to $F$ by
$$\int_{\Omega}|\nabla u|_{F}:=\sup\left\{\int_{\Omega}u \text{div} \sigma dx: \sigma\in C_{c}^{1}(\Omega; \mathbb{R}^{N}), F^{0}(\sigma)\leq1\right\}.$$
From this definition, the perimeter of $E\subset\Omega$ is defined as
$$P_{F}(E):=\int_{\Omega}|\nabla\chi_{E}|_{F} , $$
where $\chi_{E}$ is the characteristic function of $E$. Then the co-area formula
\begin{align}
\int_{\Omega}|\nabla u|_{F}=\int_{0}^{\infty}P_{F}(\{|u|>t\})dt,\nonumber
\end{align}
and the isoperimetric inequality
\begin{align}
P_{F}(E)\geq N\kappa_{0}^{\frac{1}{N}}|E|^{1-\frac{1}{N}} , \nonumber
\end{align}
hold, and the equality holds if and only if $E$ is a Wulff ball, for the proof we refer to \cite{AFTL, FM}. Moreover, in \cite{AB}, we know that if $u\in W^{1, 1}(\Omega)$, then
$$\int_{\Omega}|\nabla u|_{F}=\int_{\Omega}F(\nabla u)dx , $$
and the co-area formula becomes
$$-\frac{d}{dt}\int_{\{u>t\}}F(\nabla u)dx=P_{F}(\{u>t\}) , $$
for almost every $t$.

Here we recall the definition of the Morrey space $M^{p}(\Omega)$, see \cite{GT},
\begin{defi}
A function $f\in L^{1}(\Omega)$ is said to belong to $M^{p}(\Omega)$, $1\leq p\leq\infty$, if there exists a constant $K$ such that
$$\int_{\Omega\cap B_{r}}|f|\leq Kr^{N(1-\frac{1}{p})},$$
for all $B_{r}\subset\mathbb{R}^{N}$, with the norm
$$\parallel f\parallel_{M^{p}(\Omega)}=\inf\{K| \int_{\Omega\cap B_{r}}|f|\leq Kr^{N(1-\frac{1}{p})}\}.$$
\end{defi}

In order to prove the main result, we will need the following decay estimate of equation (\ref{1.1e}). Without loss of generality, we always assume $\Omega=B_{2}(0)$ in (\ref{1.1e}).

\begin{lem}\label{lem4.1}
Under the assumption of (\ref{1.7}), there exist $\varepsilon_{0}>0$, $r\in(0, \frac{1}{2})$, which depend only on the dimension $N$, such that for a stable solution $u$ of (\ref{1.1e}), if
$$2^{2-N}\int_{B_{2}(0)}e^{u}dx\leq\varepsilon,$$
where $\varepsilon\leq\varepsilon_{0}$, then
\begin{align}\label{3.6}
r^{2-N}\int_{B_{r}(0)}e^{u}dx\leq\frac{1}{2}\varepsilon.
\end{align}
\end{lem}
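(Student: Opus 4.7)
The plan is to prove Lemma \ref{lem4.1} as a standard $\varepsilon$-regularity iteration, using the scale-invariant quantity $\Phi(\rho):=\rho^{2-N}\int_{B_\rho(0)} e^u\,dx$. The hypothesis is $\Phi(2)\le\varepsilon$ and the goal is $\Phi(r)\le\varepsilon/2$ with $r$ depending only on $N$. I will combine three tools developed in the paper: the Moser-type higher integrability of Proposition \ref{pro3}(i), the anisotropic Hardy inequality of Proposition \ref{pro2}, and the anisotropic integration by parts enabled by hypothesis (\ref{1.7}).

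First, I apply Proposition \ref{pro3}(i) with a cutoff $\psi\in C_c^1(B_2(0))$ satisfying $\psi\equiv 1$ on $B_{3/2}(0)$ and $|\nabla\psi|\le 4$, with $\alpha$ close to $4$, to obtain the absolute higher-integrability bound
$$\int_{B_{3/2}(0)} e^{(1+\alpha)u}\,dx\le C_N.$$
Next, integrating $-Qu=e^u$ over the Wulff ball $B_\rho(0)$ and applying the divergence theorem, together with identity (\ref{1.7}) and items $(3)$, $(5)$, $(7)$ of Theorem \myref{A}, the boundary flux $\int_{\partial B_\rho} F(\nabla u)F_\xi(\nabla u)\cdot\nu\,dS$ collapses to an expression of the form $\rho^{-1}\int_{\partial B_\rho}\langle\nabla u,x\rangle\,d\sigma_F$, which furnishes a clean representation of $E(\rho):=\int_{B_\rho(0)} e^u\,dx$ in terms of a boundary integral of $\nabla u$.

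I then run the iteration step. Inserting the boundary representation into the stability inequality (\ref{1.5}) with a carefully chosen test function $\phi$ (a cutoff in the Wulff annulus $B_2\setminus B_r$, possibly weighted by a power of $F^0$ to exploit Hardy's inequality), and balancing the various terms by Cauchy--Schwarz/H\"older using the higher integrability from Step 1, leads to a recursive inequality of the form
$$\Phi(r)\le \tfrac{1}{4}\Phi(2)+C_N\,\Phi(2)^{1+\gamma}$$
for a fixed $r\in(0,1/2)$ and some $\gamma>0$ depending only on $N$. Setting $\varepsilon_0:=(4C_N)^{-1/\gamma}$ ensures that $C_N\varepsilon_0^\gamma\le 1/4$, so that whenever $\Phi(2)=\varepsilon\le\varepsilon_0$, one concludes $\Phi(r)\le\tfrac{1}{4}\varepsilon+\tfrac{1}{4}\varepsilon=\tfrac{1}{2}\varepsilon$, which is exactly (\ref{3.6}).

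The main obstacle is the iteration step: deriving a recursion with constant strictly less than $1/2$ in front of $\Phi(2)$ while keeping $r$ independent of $\varepsilon$. This is where the hypothesis (\ref{1.7}) is essential, because only under this compatibility between $F_\xi$ and $F_\xi^0$ does the boundary flux on $\partial B_\rho$ have the correct Wulff-radial structure needed for the rescaling $\rho\mapsto t\rho$ to produce a strict contraction. The higher integrability from Proposition \ref{pro3}(i) is what supplies the extra power $\Phi(2)^\gamma$ that can be absorbed once $\varepsilon_0$ is taken small enough; without it, one would only obtain $\Phi(r)\le\Phi(2)$, i.e., no decay.
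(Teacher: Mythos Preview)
Your proposal does not actually carry out the decisive step; it only asserts that ``inserting the boundary representation into the stability inequality \dots\ leads to a recursive inequality'' with leading coefficient $1/4$, without showing how. That is precisely the content of the lemma, and the mechanism you sketch (stability plus Hardy plus a boundary-flux identity) does not, by itself, produce a coefficient strictly below $1$ in front of $\Phi(2)$. Stability has already been fully exploited in Proposition~\ref{pro3}(i); invoking it a second time with a Hardy weight does not create the gain you need. As written, the argument is a plan with the hard part missing.

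The paper's proof follows an entirely different route, and the role of hypothesis~(\ref{1.7}) is not what you describe. One first uses Proposition~\ref{pro3}(i) with $\alpha=1$ to obtain $\|e^u\|_{L^2(B_1)}\le C\varepsilon^{1/2}$. Then one decomposes $u=v+w$ on $B_1(0)$ with $w$ $Q$-harmonic and $w=u$ on $\partial B_1$, so that $-\widetilde Q v:=-(Qu-Qw)=e^u$ with $v=0$ on $\partial B_1$. A level-set argument (anisotropic co-area formula plus isoperimetric inequality) yields $\|v\|_{L^1(B_1)}\le C\varepsilon$; elliptic estimates and interpolation then give $\|\nabla v\|_{L^2(B_1)}\le C\varepsilon^\beta$ with $\beta>1/2$, hence $\int_{B_1} v\,e^u\le C\varepsilon^{2\beta}$. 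The integral $r^{2-N}\int_{B_r} e^u$ is split over $\{v\le\varepsilon^\gamma\}$ and $\{v>\varepsilon^\gamma\}$. On the first set one uses $e^u\le e^{\varepsilon^\gamma}e^w$ and the \emph{mean-value inequality for the $Q$-subharmonic function $e^w$}---this is where (\ref{1.7}) enters, via the anisotropic mean-value property of \cite{6}---to obtain a contribution $\le Cr^2\varepsilon$. On the second set one uses $e^u\le \varepsilon^{-\gamma} v\,e^u$ to obtain $\le Cr^{2-N}\varepsilon^{2\beta-\gamma}$. Choosing $r$ small and then $\varepsilon_0$ small gives the claimed $\tfrac12\varepsilon$. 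The contraction thus comes from the harmonic/remainder splitting and the sub-mean-value property of $e^w$, not from re-entering the stability inequality.
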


\begin{proof}
By Proposition \ref{pro3}, let $\alpha=1$, take $\Omega=B_{2}(0)$ and $\psi=1$ in $B_{1}(0)$, we have
$$\int_{B_{1}(0)}e^{2u}dx\leq\int_{B_{2}(0)}e^{2u}\psi^{2m}dx\leq C\int_{B_{2}(0)}e^{u}(|\nabla\psi|^{2}+|\nabla\psi|^{4})dx,$$
hence $\parallel e^{u}\parallel_{L^{2}(B_{1}(0))}\leq C\varepsilon^{\frac{1}{2}}$. Take the decomposition $u=v+w$ in $B_{1}(0)$, where \begin{equation}\label{3.7}
\left\{
\begin{aligned}
-Q w&=0 \indent\mbox{in}\indent B_{1}(0)\\
w&=u \indent\mbox{on}\indent \partial B_{1}(0),\\
\end{aligned}
\right.
\end{equation}
and
\begin{equation}\label{3.8}
\left\{
\begin{aligned}
-\widetilde{Q} v:=-(Qu-Qw)&=e^{u} \indent\mbox{in}\indent B_{1}(0)\\
v&=0 \indent\mbox{on}\indent \partial B_{1}(0) . \\
\end{aligned}
\right.
\end{equation}
Set $\Omega=B_{1}(0)$, $\Omega_{t}=\{x\in\Omega|v>t\}$ and $\mu(t)=|\Omega_{t}|$, we have
\begin{align}
\int_{\Omega_{t}}e^{u}dx&=\int_{\Omega_{t}}-(Qu-Qw)dx\nonumber\\
&=\int_{\partial\Omega_{t}}\langle F(\nabla u)F_{\xi}(\nabla u)-F(\nabla w)F_{\xi}(\nabla w),\frac{\nabla(u-w)}{|\nabla(u-w)|}\rangle dS\nonumber\\
&\geq d_{0}\int_{\partial\Omega_{t}}\frac{F^{2}(\nabla(u-w))}{|\nabla(u-w)|}dS=d_{0}\int_{\partial\Omega_{t}}\frac{F^{2}(\nabla v)}{|\nabla v|}dS,\nonumber
\end{align}
where
$$d_{0}=\inf\left\{d_{X,Y}\Big| X,Y\in\mathbb{R}^{N}, X\neq0, Y\neq0, X\neq Y\right\} , $$
 with
 $$d_{X,Y}:=\frac{\langle F(X)F_{\xi}(X)-F(Y)F_{\xi}(Y), X-Y\rangle}{F^{2}(X-Y)} . $$
 It is straightforward to check $\min\{\frac{\lambda_{1}}{b^{2}}, 1\}\leq d_{0}\leq1$, where $\lambda_{1}$ is the smallest eigenvalue of $Hess(F^{2})$. By the isoperimetric inequality, the co-area formula and the H\"{o}lder inequality, we have
\begin{align}
N\kappa_{0}^{\frac{1}{N}}\mu(t)^{1-\frac{1}{N}}&\leq P_{F}(\{v>t\})=-\frac{d}{dt}\int_{\Omega_{t}}F(\nabla v)dx=\int_{\partial\Omega_{t}}\frac{F(\nabla v)}{|\nabla v|}dS\nonumber\\
&\leq\left(\int_{\partial\Omega_{t}}\frac{F^{2}(\nabla v)}{|\nabla v|}dS\right)^{\frac{1}{2}}\left(\int_{\partial\Omega_{t}}\frac{ 1}{|\nabla v|}dS\right)^{\frac{1}{2}}\nonumber\\
&\leq\left(\frac{1}{d_{0}}\int_{\Omega_{t}}e^{u}dx\right)^{\frac{1}{2}}\left(-\mu'(t)\right)^{\frac{1}{2}} . \nonumber
\end{align}
It follows that
$$-\mu'(t)\geq\frac{d_{0}N^{2}\kappa_{N}^{\frac{2}{N}}\mu(t)^{2-\frac{2}{N}}}{\int_{\Omega_{t}}e^{u}dx},$$
and hence
$$-\frac{dt}{d\mu}\leq\frac{\int_{\Omega_{t}}e^{u}dx}{d_{0}N^{2}\kappa_{N}^{\frac{2}{N}}\mu(t)^{2-\frac{2}{N}}}\leq C\frac{\int_{\Omega}e^{u}dx}{\mu(t)^{2-\frac{2}{N}}}.$$
Integrating the above inequality over $(\mu, |\Omega|)$, we have
\begin{align}
t(\mu)&\leq C\parallel e^{u}\parallel_{L^{1}(\Omega)}\int_{\mu}^{|\Omega|}\frac{1}{s^{2-\frac{2}{N}}}ds\nonumber\\
&\leq C\parallel e^{u}\parallel_{L^{1}(\Omega)}\left(\frac{1}{\mu^{1-\frac{2}{N}}}-\frac{1}{|\Omega|^{1-\frac{2}{N}}}\right) . \nonumber
\end{align}
Using the co-area formula again, we have
\begin{align}
\int_{\Omega}vdx&=\int_{0}^{\infty}t\cdot(-\mu'(t))dt=\int_{0}^{|\Omega|}t(\mu)d\mu\nonumber\\
&\leq\int_{0}^{|\Omega|}C\parallel e^{u}\parallel_{L^{1}(\Omega)}\left(\frac{1}{\mu^{1-\frac{2}{N}}}-\frac{1}{|\Omega|^{1-\frac{2}{N}}}\right)d\mu\nonumber\\
&\leq C|\Omega|^{\frac{2}{N}}\parallel e^{u}\parallel_{L^{1}(\Omega)}\leq   C\varepsilon.\nonumber
\end{align}
Hence, we have $\parallel v\parallel_{L^{1}(B_{1}(0))}\leq C\varepsilon$ and $\parallel e^{u}\parallel_{L^{2}(B_{1}(0))}\leq C\varepsilon^{\frac{1}{2}}$, by the elliptic estimate, we have $\parallel v\parallel_{W^{2,2}(B_{1}(0))}\leq C\varepsilon^{\frac{1}{2}}$, then it follows from the Sobolev embedding theorem, we get $\parallel v\parallel_{L^{\frac{2N}{N-4}}(B_{1}(0))}\leq C\varepsilon^{\frac{1}{2}}$, by interpolation inequality between $L^{q}$ space, we have
$$\parallel v\parallel_{L^{2}(B_{1}(0))}=\parallel v\parallel_{L^{1}(B_{1}(0))}^{\frac{4}{N+4}}\parallel v\parallel_{L^{\frac{2N}{N-4}}(B_{1}(0))}^{\frac{4}{N+4}}\leq C\varepsilon^{\alpha},$$
where $\alpha=\frac{N+8}{2N+8}>\frac{1}{2}$, then by interpolation inequality between Sobolev space, we get
$$\parallel \nabla v\parallel_{L^{2}(B_{1}(0))}\leq C\left(\varepsilon^{\frac{1}{4}(\alpha-\frac{1}{2})}\parallel \nabla^{2} v\parallel_{L^{2}(B_{1}(0))}+\varepsilon^{-\frac{1}{4}(\alpha-\frac{1}{2})}\parallel v\parallel_{L^{2}(B_{1}(0))}\right)\leq C\varepsilon^{\beta},$$
where $\beta>\frac{1}{2}$ depends only on $N$, it follows from the above inequality, we get
\begin{align}
\int_{B_{1}(0)}ve^{u}dx&=\int_{B_{1}(0)}-v\widetilde{Q}vdx
\leq d_{0}\int_{B_{1}(0)}F^{2}(\nabla v)dx\nonumber\\
&\leq d_{0}b^{2}\int_{B_{1}(0)}|\nabla v|^{2}dx\leq C\varepsilon^{2\beta}.\nonumber
\end{align}
We decompose the estimate of $r^{2-N}\int_{B_{r}(0)}e^{u}dx$ into two parts: $\{v\leq\varepsilon^{\gamma}\}$ and $\{v>\varepsilon^{\gamma}\}$, where $\gamma=\frac{1}{2}(2\beta-1)>0$. Since $Qw=0$, we have
$$Q(e^{w})=e^{w}F^{2}(\nabla w)\geq0,$$
under the assumption of (\ref{1.7}), we have the mean-value inequality, see \cite{6},
$$e^{w(y)}\leq \frac{1}{\kappa_{0}r^{N}}\int_{B_{r}(y)}e^{w(x)}dx,$$
for all $B_{r}(y)\subset B_{1}(0)$. For $r\in(0, \frac{1}{2})$, for any $x\in B_{r}(0)$ we have $B_{\frac{1}{2}}(x)\subset B_{1}(0)$, hence,
$$r^{-N}\int_{B_{r}(0)}e^{w}dx\leq2^{N}\int_{B_{1}(0)}e^{w}dx\leq2^{N}\int_{B_{1}(0)}e^{u}dx . $$
It follows that
\begin{align}
r^{2-N}\int_{B_{r}(0)\cap\{v\leq\varepsilon^{\gamma}\}}e^{u}dx&\leq r^{2-N}\int_{B_{r}(0)\cap\{v\leq\varepsilon^{\gamma}\}}e^{\varepsilon^{\gamma}}e^{w}dx\nonumber\\
&\leq r^{2}e^{\varepsilon^{\gamma}}r^{-N}\int_{B_{r}(0)}e^{w}dx\nonumber\\
&\leq2^{N} r^{2}e^{\varepsilon^{\gamma}}\int_{B_{1}(0)}e^{u}dx\leq Cr^{2}\varepsilon.\nonumber
\end{align}
For the second part,
\begin{align}
r^{2-N}\int_{B_{r}(0)\cap\{v>\varepsilon^{\gamma}\}}e^{u}dx&\leq r^{2-N}\int_{B_{r}(0)\cap\{v\leq\varepsilon^{\gamma}\}}\frac{v}{\varepsilon^{\gamma}}e^{u}dx\nonumber\\
&\leq \varepsilon^{-\gamma}r^{2-N}\int_{B_{r}(0)}ve^{u}dx\nonumber\\
&\leq Cr^{2-N}\varepsilon^{2\beta-\gamma} . \nonumber
\end{align}
Hence, we have
$$r^{2-N}\int_{B_{r}(0)}e^{u}dx\leq Cr^{2}\varepsilon+Cr^{2-N}\varepsilon^{2\beta-\gamma},$$
note that $2\beta-\gamma>1$, we can choose $r$ small enough, then $\varepsilon_{0}$ small enough, such that for any $\varepsilon\leq\varepsilon_{0}$, holds
$$Cr^{2}\varepsilon+Cr^{2-N}\varepsilon^{2\beta-\gamma}\leq\frac{1}{2}\varepsilon,$$
we obtain the conclusion.
\end{proof}

It follows from the above decay estimate Lemma \ref{lem4.1} and the priori estimate in Morrey space, we have the following $\varepsilon$-regularity theorem.

\begin{lem}\label{lem4.2}
Under the assumption of (\ref{1.7}), suppose $u$ is a stable weak solution of (\ref{1.1e}), if there exist $\varepsilon_{0}>0$ such that
$$\int_{B_{1}(0)}e^{u}dx\leq\varepsilon, $$
where $\varepsilon\leq\varepsilon_{0}$, then
$$\sup_{B_{\frac{1}{4}}(0)}u<\infty.$$
\end{lem}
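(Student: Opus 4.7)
The plan is to iterate Lemma \ref{lem4.1} via the natural scale invariance of the equation to place $e^u$ in a subcritical Morrey space, and then apply a standard $L^\infty$-estimate with Morrey-integrable right-hand side to the decomposition $u=v+w$ introduced in the proof of Lemma \ref{lem4.1}.

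First I would exploit the scale invariance of (\ref{1.1e}) under $u_{y,\sigma}(x):=u(y+\sigma x)+2\log\sigma$, which gives $E_0(\rho)[u_{y,\sigma}]=E_y(\sigma\rho)[u]$ for the scale-invariant quantity $E_y(\rho):=\rho^{2-N}\int_{B_\rho(y)}e^u\,dx$. Lemma \ref{lem4.1} then reads: if $E_y(t)\le\varepsilon_0$, then $E_y(rt/2)\le\tfrac{1}{2}E_y(t)$, where $r\in(0,1/2)$ is the dimensional constant supplied by that lemma. For any $y\in B_{1/2}(0)$, the hypothesis $\int_{B_1(0)}e^u\,dx\le\varepsilon$ gives $E_y(1/2)\le 2^{N-2}\varepsilon$; after a harmless shrinking of $\varepsilon_0$ one has $E_y(1/2)\le\varepsilon_0$, and a geometric iteration yields
\begin{equation*}
E_y(\rho)\le C\,\varepsilon\,\rho^{\beta},\qquad \beta:=\frac{\log 2}{|\log(r/2)|}>0,
\end{equation*}
uniformly in $y\in B_{1/2}(0)$ and $\rho\in(0,1/2)$. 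This is precisely the Morrey estimate $e^u\in M^{p}(B_{1/2}(0))$ with $p=N/(2-\beta)>N/2$ and $\|e^u\|_{M^p}\le C\varepsilon$.

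Next I would repeat the decomposition $u=v+w$ on $B_{1/2}(0)$ from Lemma \ref{lem4.1}: let $w$ solve $-Qw=0$ with $w=u$ on $\partial B_{1/2}$, and let $v:=u-w$, so that (with $\widetilde{Q}$ as in (\ref{3.8})) $-\widetilde{Q}v=e^u$ and $v|_{\partial B_{1/2}}=0$. The comparison principle for $Q$ (applied to $u-w$, noting $-Qu=e^u\ge 0$) gives $u\ge w$, hence $e^w\le e^u$; assumption (\ref{1.7}) makes $e^w$ a $Q$-subsolution, so the anisotropic mean-value inequality of \cite{6} (already invoked in Lemma \ref{lem4.1}) yields
\begin{equation*}
\sup_{B_{1/4}(0)} e^{w}\le \frac{4^{N}}{\kappa_0}\int_{B_{1/2}(0)}e^{w}\,dx\le C\varepsilon,
\end{equation*}
so $w$ is bounded above on $B_{1/4}(0)$. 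The potential $v$ satisfies a divergence-form uniformly elliptic equation with right-hand side in $M^p(B_{1/2})$, $p>N/2$, and the classical De Giorgi--Stampacchia $L^\infty$ estimate gives $\|v\|_{L^\infty(B_{1/4})}\le C\|e^u\|_{M^p}\le C\varepsilon$. Adding the two bounds produces $u=v+w\le C$ on $B_{1/4}(0)$, as required.

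The main difficulty I anticipate is justifying the Stampacchia-type $L^\infty$ bound for $v$ in the Finsler setting: the coefficients of $\widetilde{Q}$ depend on $\nabla u$ and are only a priori $L^\infty$, but uniform ellipticity is supplied by (\ref{1.2})--(\ref{1.3}) (equivalently by the monotonicity constant $d_0$ from the proof of Lemma \ref{lem4.1}), so the De Giorgi--Nash--Moser iteration goes through. The uniformity of the iteration across base points $y\in B_{1/2}(0)$ and the passage from dyadic scales to arbitrary radii in the Morrey seminorm are standard covering arguments.
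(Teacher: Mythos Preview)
Your proposal is correct and follows the same overall architecture as the paper: iterate Lemma~\ref{lem4.1} to place $e^u$ in a supercritical Morrey space $M^{N/(2-\delta)}(B_{1/2})$, then split $u=v+w$ on $B_{1/2}$ exactly as in (\ref{3.9})--(\ref{3.10}) and bound each piece. The differences are in the tools used for the two pieces.

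For $w$, the paper simply says ``from the elliptic estimate $w$ is bounded in $B_{1/4}$'' without elaboration; your argument (comparison principle gives $u\ge w$, then the anisotropic mean-value inequality for the $Q$-subsolution $e^w$) is a clean, explicit way to realise that step and is entirely in keeping with the methods already deployed in Lemma~\ref{lem4.1}.

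For $v$, the paper does \emph{not} invoke De~Giorgi--Stampacchia. Instead it repeats the anisotropic level-set/rearrangement computation from Lemma~\ref{lem4.1} (co-area formula, isoperimetric inequality for $P_F$), this time feeding in the Morrey bound $\int_{\Omega_t}e^u\le C\mu(t)^{1-(2-\delta)/N}$ rather than the $L^1$ bound; the resulting ODE $-dt/d\mu\le C\mu^{-1+\delta/N}$ integrates to a finite value, yielding $v\in L^\infty(B_{1/2})$ directly. Your route via linear divergence-form theory (the coefficients of $\widetilde Q$ are bounded measurable and uniformly elliptic by the positivity of $d_0$ used in Lemma~\ref{lem4.1}) is a legitimate alternative and arguably more standard, but it outsources the work to De~Giorgi--Nash--Moser/Stampacchia with Morrey right-hand side; the paper's approach stays self-contained within the anisotropic symmetrization framework it has already set up.
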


\begin{proof}
We choose $\varepsilon_{0}$ small, such that for any $y\in B_{\frac{1}{2}}(0)$, holds
$$2^{N-2}\int_{B_{\frac{1}{2}}(y)}e^{u}\leq\varepsilon,$$
thus we can use Lemma \ref{lem4.1}, by a standard induction, we get $\exists \delta>0$ and $r<\frac{1}{2}$ such that $\forall y\in B_{\frac{1}{2}}(0)$, holds
$$\int_{B_{r}(y)}e^{u}dx\leq Cr^{N-2+\delta},$$
this implies $e^{u}\in M^{\frac{N}{2-\delta}}$. Take the decomposition $u=v+w$, where \begin{equation}\label{3.9}
\left\{
\begin{aligned}
-Q w&=0 \indent\mbox{in}\indent B_{\frac{1}{2}}(0)\\
w&=u \indent\mbox{on}\indent \partial B_{\frac{1}{2}}(0),\\
\end{aligned}
\right.
\end{equation}
and
\begin{equation}\label{3.10}
\left\{
\begin{aligned}
-\widetilde{Q} v:=-(Qu-Qw)&=e^{u} \indent\mbox{in}\indent B_{\frac{1}{2}}(0)\\
v&=0 \indent\mbox{on}\indent \partial B_{\frac{1}{2}}(0),\\
\end{aligned}
\right.
\end{equation}
from the elliptic estimate, we can get $w$ is bounded in $B_{\frac{1}{4}}(0)$. Next, to estimate $v$, we also use the level set method. Denote $\Omega=B_{\frac{1}{2}}(0)$, set $\Omega_{t}=\{x\in\Omega|v>t\}$ and $\mu(t)=|\Omega_{t}|$, we have
\begin{align}
\int_{\Omega_{t}}e^{u}dx=\int_{\Omega_{t}}-Qvdx=\int_{\partial\Omega_{t}}F(\nabla v)F_{\xi}(\nabla v)\frac{\nabla v}{|\nabla v|}dS=\int_{\partial\Omega_{t}}\frac{F^{2}(\nabla v)}{|\nabla v|}dS,\nonumber
\end{align}
by the isoperimetric inequality, the co-area formula and Holder's inequality, we have
\begin{align}
N\kappa_{0}^{1/N}\mu(t)^{1-1/N}&\leq P_{F}(\Omega_{t})=-\frac{d}{dt}\int_{\Omega_{t}}F(\nabla v)dx\nonumber\\
&=\int_{\partial\Omega_{t}}\frac{F(\nabla v)}{|\nabla v|}dS\leq\left(\int_{\partial\Omega_{t}}\frac{F^{2}(\nabla v)}{|\nabla v|}dS\right)^{1/2}\left(\int_{\partial\Omega_{t}}\frac{1}{|\nabla v|}dS\right)^{1/2}\nonumber\\
&=\left(\int_{\Omega_{t}}e^{u}dx\right)^{1/2}\left(-\mu'(t)\right)^{1/2} . \nonumber
\end{align}
It follows that
$$-\mu'(t)\geq\frac{N^{2}\kappa_{0}^{2/N}\mu_{t}^{2-2/N}}{\int_{\Omega_{t}}e^{u}dx}. $$
Hence
$$-\frac{dt}{d\mu}\leq\frac{\int_{\Omega_{t}}e^{u}dx}{N^{2}\kappa_{0}^{2/N}\mu(t)^{2-2/N}}\leq C\frac{\mu^{1-\frac{1}{\frac{N}{2-\delta}}}\parallel e^{u}\parallel_{M^{\frac{N}{2-\delta}}(\Omega_{t})}}{\mu^{2-2/N}}\leq C\frac{\parallel e^{u}\parallel_{M^{\frac{N}{2-\delta}}(\Omega)}}{\mu^{1-\frac{\delta}{N}}} . $$
Integrating the above inequality over $(\mu, |\Omega|)$, we have
\begin{align}
t(\mu)&\leq\int_{\mu}^{|\Omega|}C\frac{\parallel e^{u}\parallel_{M^{\frac{N}{2-\delta}}(\Omega)}}{s^{1-\frac{\delta}{N}}}ds\nonumber\\
&\leq C\parallel e^{u}\parallel_{M^{\frac{N}{2-\delta}}(\Omega)}\left(|\Omega|^{\frac{\delta}{N}}-\mu^{\frac{\delta}{N}}\right)<\infty. \nonumber
\end{align}
This inequality implies that $\parallel v\parallel_{L^{\infty}(\Omega)}<\infty$. Thus we get $u$ is bounded in $B_{\frac{1}{4}}(0)$.
\end{proof}


Now, we prove our main partial regularity result.

\begin{proof}[Proof of Theorem \ref{thm5}]
 The equation (\ref{1.1e}) is invariant under the rescaling
$$u^{r}(x)=u(rx)+2\log r,$$
from (\ref{2.2}), we know that $\forall p\in(1, 5)$, $\exists C>0$ such that
$$\int_{B_{r}(x)}e^{pu}\leq Cr^{N-2p}.$$
Hence, if
$$r^{2p-N}\int_{B_{r}(x)}e^{pu}\leq\varepsilon,$$
by H\"{o}lder's inequality, we have
$$\int_{B_{1}(x)}e^{u^{r}(y)}dy\leq\varepsilon . $$
Therefore, it follows from Lemma \ref{lem4.2}, we have $u^{r}(y)$ is bounded in $B_{\frac{1}{4}}(x)$, this implies that $u$ is bounded in $B_{\frac{r}{4}}(x)$. Thus, for any $x\in S$ and $r>0$, we have
$$r^{2p-N}\int_{B_{r}(x)}e^{pu}>\varepsilon . $$
From the Besicovitch covering Lemma, see \cite{LY}, we have
$$H^{N-2p}(S)=0. $$
Since $p$ is arbitrary in $(1, 5)$, we complete the proof.
\end{proof}

\section{ The Liouville Theorems }\label{Sec-LT}
In this section, we are mainly devoted to the proof of Liouville theorem for stable solutions and finite Morse index solutions.

\begin{proof}[Proof of Theorem \ref{1.1}]
By contradiction, suppose $u$ is the stable solution of equation (\ref{1.1e}). Proposition \ref{pro3} tell us that we can fix an integer $m\geq10$, and choose $\alpha\in(0,4)$ such that $N-2(\alpha+1)<0$, for every $x\in \mathbb{R}^{N}$ consider the function $\phi_{R}(x)=\phi(\frac{F^{0}(x)}{R})$, where $\phi\in C_{c}^{1}(\mathbb{R})$ satisfy $0\leq\phi\leq1$ everywhere on $\mathbb{R}$ and

$$ \phi(t)=\left\{
\begin{aligned}
1, &  & \indent\mbox{if}\indent |t|\leq1 \\
0, & & \indent\mbox{if}\indent |t|\geq2. \\
\end{aligned}
\right.
$$
For every $R>0$, we have
$$\int_{B_{R}(0)}e^{(\alpha+1)u}dx\leq \widetilde{C}R^{N-2(\alpha+1)},$$
where $\widetilde{C}$ is a positive constant independent on $R$. Letting $R\rightarrow+\infty$, we obtain $\int_{\mathbb{R}^{N}}e^{(\alpha+1)u}dx=0$, a contradiction.
\end{proof}


\begin{proof}[Proof of Theorem \ref{thm1.5}] Similarly, we can also fix $m$ and choose $\alpha$ such that
$$\int_{\mathbb{R}^{N}}u^{p+2\alpha-1}dx=0$$
and
$$\int_{\mathbb{R}^{N}}u^{-p-2\alpha-1}dx=0,$$
we obtain the contradiction.
\end{proof}

\begin{proof}[Proof of Theorem \ref{thm3}]
When $N=2$, without loss of generality, take $x_{0}=0$. We observe that there exists $R=R(\lambda)>1$ such that
$$e^{u(x)}\leq \frac{1}{4F^{0}(x)^{2}\ln^{2}(F^{0}(x))},$$
for $F^{0}(x)>R$. It is straightforward to see that  $v(x)=\ln^{\frac{1}{2}}(F^{0}(x))$ solves quasi-linear equation
$$-Qv=\frac{1}{4F^{0}(x)^{2}\ln^{2}(F^{0}(x))}v . $$
For any $\phi\in C_{c}^{\infty}(\mathbb{R}^{2}\setminus B_{R})$, we have
$$\int_{\mathbb{R}^{2}\setminus B_{R}}|F_{\xi}(\nabla v)\cdot\nabla\phi|^{2}-\frac{1}{4F^{0}(x)^{2}\ln^{2}(F^{0}(x))}\phi^{2}dx\geq0. $$
From the properties of $F$,  once can see that  $F_{\xi}(\nabla v)=-F_{\xi}(\nabla u)$. So,  we have
$$\int_{\mathbb{R}^{2}\setminus B_{R}}|F_{\xi}(\nabla u)\cdot\nabla\phi|^{2}+\sum_{i,j=1}^{2}F(\nabla u)F_{\xi_{i}\xi_{j}}(\nabla u)\phi_{x_{i}}\phi_{x_{j}}-e^{u}\phi^{2}dx\geq0 , $$
where we used the definition of stable solution. 

We now prove nonexistence of stable outside a compact set solutions of $\mathbb{R}^{N}$  when  $3\leq N\leq9$. By contradiction, we assume $u$ is a solution of (\ref{1.1e}) which is stable outside a compact set of $\mathbb{R}^{N}$. In order to get the contradiction, we will split it into four steps. 

\emph{Step 1}. There exists $R_{0}=R_{0}(u)>0$ such that

\emph{(a) for any $\alpha\in(0,4)$ and $r>R_{0}+3$ there exist positive constant $A$ and $B$ depending on $\alpha$, $N$ and $R_{0}$ but not $r$, holds}

\emph{\begin{align}
\label{3.2}
\int_{B_{r}\setminus B_{R_{0}+2}}e^{(\alpha+1)u}dx\leq A+Br^{N-2(\alpha+1)}.
\end{align} }

\emph{(b) For any $B_{2R}(y)\subset\{x\in\mathbb{R}^{N}: F^{0}(x)>R_{0}\}$ and $\alpha\in(0,4)$, we have}

\emph{\begin{align}
\label{3.3}
\int_{B_{2R}(y)}e^{(\alpha+1)u}dx\leq CR^{N-2(\alpha+1)},
\end{align}}

\emph{ where $C$ is a positive constant depending on $\alpha$, $N$ and $R_{0}$ but not on $R$ and $y$.}

Since $u$ is stable outside a compact set of $\mathbb{R}^{N}$, there exist $R_{0}>0$ such that proposition holds true with $\Omega:=\mathbb{R}^{N}\setminus\overline{B_{R_{0}}(0)}$, we fix $m=10$, and for every $r>R_{0}+3$, we consider the following test function $\xi_{r}\in C_{c}^{1}(\mathbb{R}^{N})$
$$ \xi_{r}(x)=\left\{
\begin{aligned}
\theta_{R_{0}}(F^{0}(x)), &  & \indent\mbox{if}\indent x\in B_{R_{0}+3} \\
\phi\left(\frac{F^{0}(x)}{r}\right), &  &\indent\mbox{if}\indent x\in \mathbb{R}^{N}\setminus B_{R_{0}+3}, \\
\end{aligned}
\right.
$$
where $\phi$ is defined in the Proof of Theorem \ref{thm1} and for $s>0$, $\theta_{s}$ satisfying $\theta_{s}\in C_{c}^{1}(\mathbb{R})$, $0\leq\theta_{s}\leq1$ everywhere on $\mathbb{R}$ and
$$ \theta_{s}(t)=\left\{
\begin{aligned}
0, &  & \indent\mbox{if}\indent |t|\leq s+1 \\
1, &  &\indent\mbox{if}\indent |t|\geq s+2. \\
\end{aligned}
\right.
$$
It follows from Proposition \ref{pro3} that
\begin{align}
\int_{B_{r}\setminus B_{R_{0}+2}}e^{(\alpha+1)u}dx&\leq\int_{\Omega}e^{(\alpha+1)u}dx\nonumber\\
&\leq C\int_{\Omega}\left(|\nabla\xi_{r}|^{2}+|\nabla\xi_{r}|^{4}\right)^{\alpha+1}dx\nonumber\\
&\leq C_{1}(\alpha,N,\theta_{R_{0}})+C_{2}(\alpha,N,\phi)r^{N-2(\alpha+1)},\nonumber
\end{align}
hence the inequality (\ref{3.2}) holds.

The integral estimate (\ref{3.3}) is obtained in the same way by using the test functions $\psi_{R,y}(x)=\phi(\frac{F^{0}(x-y)}{R})$ in Proposition \ref{pro3}.

\emph{Step 2}. There exist $\eta>0$ and $R_{1}=R_{1}(N,\eta,u)>R_{0}$ such that 
\begin{align}
\label{3.4}
\int_{\mathbb{R}^{N}\setminus B_{R_{1}}}e^{\frac{N}{2}u}dx\leq\eta^{\frac{N}{2}}.
\end{align}

Let $\alpha_{1}:=\frac{N-2}{2}\in(0,4)$, for $r>R_{0}+3$, by (\ref{3.2}) we have
$$\int_{B_{r}\setminus B_{R_{0}+2}}e^{\frac{N}{2}u}dx\leq\int_{B_{r}\setminus B_{R_{0}+2}}e^{(\alpha_{1}+1)u}dx\leq A+Br^{N-2(\alpha_{1}+1)},$$
let $r\rightarrow\infty$, then we obtain the result.

\emph{Step 3. } The following asymptotic limit holds,  $$\lim_{F^{0}(x)\rightarrow\infty}F^{0}(x)^{2}e^{u(x)}=0.$$
Set $\varepsilon=\frac{1}{10}$, we observe that $\frac{N}{2-\varepsilon}\in(1,5)$.  Since $3\leq N\leq9$,  there exist $\alpha_{2}=\alpha_{2}(N)\in(0,4)$ such that $\alpha_{2}+1=\frac{N}{2-\varepsilon}$. Next we fix $\eta>0$ and observe that $w=e^{u}$ satisfies
 $$-Q w-e^{u}w\leq0 \indent\mbox{in}\indent B_{2R}(y).$$
  Applying Harnack's inequality, see \cite{11}, for positive solutions of the quasi-linear equation
$$-Q w=e^{u}w,$$
we have, for any $t>1$
\begin{align}
\label{3.5}
\parallel w\parallel_{L^{\infty}(B_{R}(y))}\leq CR^{-\frac{N}{t}}\parallel w\parallel_{L^{t}(B_{2R}(y))},
\end{align}
where $C$ is a positive constant depending on $N$ and $R^{\varepsilon}\parallel e^{u}\parallel_{L^{\frac{N}{2-\varepsilon}}(B_{2R}(y))}$. In order to apply the above result, we consider point $y\in\mathbb{R}^{N}$ such that $F^{0}(y)>10R_{1}$ and set $R=\frac{F^{0}(y)}{4}$, $t=\frac{N}{2}>1$, hence $R_{1}>R_{0}$ is defined by step 2. this choose yields
$$B_{2R}(y)\subset\{x\in\mathbb{R}^{N}: F^{0}(x)>R_{0}\},$$
$$\int_{\{F^{0}(x)>R_{1}\}}e^{\frac{N}{2}u}dx<\eta^{\frac{N}{2}},$$
and
\begin{align}
R^{\varepsilon}\parallel e^{u}\parallel_{L^{\frac{N}{2-\varepsilon}}(B_{2R}(y))}&=R^{\varepsilon}\left(\int_{B_{2R}(y)}e^{(\alpha_{2}+1)u}dx\right)^{\frac{2-\varepsilon}{N}}\nonumber\leq R^{\varepsilon}\left[CR^{N-2(\alpha_{2}+1)}\right]^{\frac{2-\varepsilon}{N}}\leq C_{1}.\nonumber
\end{align}

\emph{Step 4.}  In this step, we complete the proof. Let $v(r)=\frac{1}{N\kappa_{0}r^{N-1}}\int_{\partial B_{r}}udS$, then
$$v'(r)=\frac{1}{N\kappa_{0}r^{N-1}}\int_{\partial B_{r}}\langle \nabla u,\frac{x}{r}\rangle dS,$$
by the assumption of (\ref{1.7}), we have $\langle \nabla u,x\rangle=F(\nabla u)\langle F_{\xi}(\nabla u),F_{\xi}^{0}(x)\rangle F^{0}(x)$ and $F^{0}(x)=r$, $\nu=F^{0}_{\xi}(x)$ on $\partial B_{r}$, integration by parts
\begin{align}
v'(r)=\frac{1}{N\kappa_{0}r^{N-1}}\int_{\partial B_{r}}\sum_{i=1}^{N}F(\nabla u)F_{\xi_{i}}(\nabla u)\nu_{i} dS=\frac{1}{N\kappa_{0}r^{N-1}}\int_{B_{r}}Qudx . \nonumber
\end{align}
Then
\begin{align}
-v'(r)&=\frac{1}{N\kappa_{0}r^{N-1}}\int_{B_{r}}-Qudx=\frac{1}{N\kappa_{0}r^{N-1}}\int_{B_{r}}e^{u}dx\nonumber\\
&\leq\frac{1}{N\kappa_{0}r^{N-1}}\left(\int_{B_{r}}e^{(\alpha+1)u}dx\right)^{\frac{1}{\alpha+1}}\left(\int_{B_{r}}dx\right)^{\frac{\alpha}{\alpha+1}}\nonumber \leq \frac{C}{r}. \nonumber
\end{align}
It follows that
$$r^{2}e^{v(r)}\geq Cr.$$
By Jensen's inequality, we have

$$\max_{\partial B_{r}}(F^{0}(x)^{2}e^{u(x)})=r^{2}\max_{\partial B_{r}}e^{u(x)}\geq \frac{r^{2}}{N\kappa_{0}r^{N-1}}\int_{\partial B_{r}}e^{u}dS\geq r^{2}e^{v(r)}\geq Cr,$$
this is a contradiction.
\end{proof}

Here, we classify stable outside a compact set of $\mathbb{R}^{N}$ if $f(u)=u^{p}$ with $p=\frac{N+2}{N-2}$. When $F(\xi)=|\xi|$,  for the Laplacian operator,  such classification is established by  Farina in \cite{F}. For the quasilinear setting, Ciraolo-Figalli-Roncoroni in \cite{CFR} studied (\ref{1.1}) for $f(u)=u^{p}$ with the critical exponent. 

\begin{thm}\label{thm1.6}
If $f(u)=u^{p}$ with the critical exponent $p=\frac{N+2}{N-2}$,  then $u$ is a stable outside a compact set solution of (\ref{1.1}) in $\mathbb{R}^{N}$ if and only if
$$u(x)=\left(\frac{\lambda\sqrt{N(N-2)}}{\lambda^{2}+F^{0}(x-x_{0})^{2}}\right)^{\frac{N-2}{2}},$$
 for some $\lambda>0$ and $x_{0}\in\mathbb{R}^{N}$.
\end{thm}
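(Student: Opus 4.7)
The plan is to prove both implications of the classification.

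For the \emph{``if''} direction, the bubble $U_{\lambda,x_{0}}(x):=\bigl(\lambda\sqrt{N(N-2)}/(\lambda^{2}+F^{0}(x-x_{0})^{2})\bigr)^{(N-2)/2}$ is known to satisfy $-Qu=u^{(N+2)/(N-2)}$ by the results of \cite{CFR}, so only the stability outside a large Wulff ball remains to be verified. Here I would rely on the asymptotic $p\,U_{\lambda,x_{0}}^{p-1}(x)\sim c\,F^{0}(x-x_{0})^{-4}$ as $F^{0}(x)\to\infty$, combined with the ellipticity bound \eqref{1.3} and the sharp anisotropic Hardy inequality (Proposition \ref{pro2} with $s=2$, applied twice to generate the weight $F^{0}(x)^{-4}$), to dominate the zeroth-order term $\int p\,U_{\lambda,x_{0}}^{p-1}\phi^{2}$ by the gradient terms of the stability inequality \eqref{1.5} for every $\phi\in C_{c}^{1}(\mathbb{R}^{N}\setminus\overline{B_{R}})$ with $R$ sufficiently large.

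For the \emph{``only if''} direction, let $u$ be a positive solution stable on $\mathbb{R}^{N}\setminus\overline{B_{R_{0}}}$. The strategy is to show $u\in L^{p+1}(\mathbb{R}^{N})$, because then $u$ is a finite-energy positive solution of the critical anisotropic equation and the classification theorem of Ciraolo-Figalli-Roncoroni \cite{CFR} identifies it with a bubble. To extract the global integrability, I would mimic Step 1 in the proof of Theorem \ref{thm3}: apply Proposition \ref{pro3}(ii) on $\Omega:=\mathbb{R}^{N}\setminus\overline{B_{R_{0}}}$ with a cut-off $\psi$ that equals $1$ on $B_{R}\setminus B_{R_{0}+2}$ and is supported in $B_{2R}\setminus B_{R_{0}+1}$, and with $\alpha=1$ (which lies in the admissible interval $(p-\sqrt{p^{2}-p},p+\sqrt{p^{2}-p})$), so that the left-hand side of \eqref{2.3} is exactly $\int u^{p+1}\psi^{2m}$. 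A direct computation shows that, at the critical exponent, $2(p+1)/(p-1)=N$, which makes the first contribution to the right-hand side of \eqref{2.3} scale-invariant and therefore uniformly bounded as $R\to\infty$. Passing to the limit and combining with local elliptic regularity near $\partial B_{R_{0}}$ yields $u\in L^{p+1}(\mathbb{R}^{N})$.

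The main obstacle is that Proposition \ref{pro3}(ii) presupposes $p>3$, which at the critical exponent forces $N=3$. For $N\geq 4$ (where $p\leq 3$), the second term on the right-hand side of \eqref{2.3} involves the exponent $4/(p-3)$, which is undefined or non-positive, so the Moser iteration as stated breaks down at the borderline. To cover those dimensions, I would either refine the cut-off functions $a_{k},b_{k}$ in the proof of Proposition \ref{pro3} to accommodate the critical exponent, or bypass the iteration using the anisotropic monotonicity formula developed in Section \ref{Sec-MF}; the latter, standard in the isotropic theory, forces the decay $u(x)\lesssim F^{0}(x)^{-(N-2)}$ at infinity for any stable-outside-compact solution, from which $u\in L^{p+1}(\mathbb{R}^{N})$ follows by direct integration. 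Once finite energy is in hand, the classification of \cite{CFR} completes the argument.
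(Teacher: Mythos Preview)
Your ``only if'' direction is unnecessarily elaborate. The paper simply invokes the classification in \cite{CFR}, which asserts that \emph{every} positive weak solution of the critical anisotropic equation $-Qu=u^{(N+2)/(N-2)}$ is already a bubble $U_{\lambda,x_{0}}$; no stability, finite-energy, or $L^{p+1}$ hypothesis is needed. Hence this direction is a one-line citation, and the entire machinery you propose---Proposition \ref{pro3}(ii) with $\alpha=1$, the $p>3$ obstruction, the refinement of the cut-offs $a_{k},b_{k}$, and the monotonicity-formula workaround---is superfluous.

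For the ``if'' direction your idea is in the right spirit but the mechanism differs from the paper's. Applying Hardy \emph{twice} to produce the weight $F^{0}(x)^{-4}$ is not straightforward (iterating Proposition \ref{pro2} would require inserting $\phi/F^{0}$ as a test function, which is not admissible). The paper instead observes that on $\{F^{0}(x)>R_{0}\}$ with $R_{0}$ large one has
\[
p\,U_{\lambda,x_{0}}^{p-1}(x)\;\le\;\frac{(N-2)^{2}}{4}\,F^{0}(x)^{-2},
\]
so a \emph{single} application of Proposition \ref{pro2} with $s=2$ suffices. Moreover, rather than appealing to the ellipticity bound \eqref{1.3}, the paper exploits the structural identity $F_{\xi}(\nabla U_{\lambda,x_{0}})=\pm\,x/F^{0}(x)$ (property (7) of Theorem A applied to a function of $F^{0}$), which turns the first term of the stability quadratic form into exactly $\bigl|\tfrac{x}{F^{0}(x)}\cdot\nabla\phi\bigr|^{2}$, matching the right-hand side of the anisotropic Hardy inequality on the nose; the Hessian term $F(\nabla u)F_{\xi_{i}\xi_{j}}(\nabla u)\phi_{x_{i}}\phi_{x_{j}}$ is simply discarded since it is nonnegative by convexity of $F$.
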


\begin{proof}
It is discussed in \cite{CFR} that any positive weak solutions of equation (\ref{1.1}) are radial and  the form is $$u_{\lambda}(x)=\left(\frac{\lambda\sqrt{N(N-2)}}{\lambda^{2}+F^{0}(x-x_{0})^{2}}\right)^{\frac{N-2}{2}},$$
for some $\lambda>0$ and $x_{0}\in\mathbb{R}^{N}$. Next, we will claim that $u_{\lambda}(x)$ is stable outside a compact set. Assume $x_{0}=0$, we observe that $p|u_{\lambda}(x)|^{p-1}=O((F^{0}(x))^{-4})$ as $F^{0}(x)\rightarrow\infty$, therefore, we can find $R_{0}>0$ such that for any $F^{0}(x)>R_{0}$ we have
$$p|u_{\lambda}(x)|^{p-1}\leq\frac{(N-2)^{2}}{4}(F^{0}(x))^{-2} . $$
From  Theorem A, we have $F_{\xi}(\nabla u_{\lambda})=\frac{x}{F^{0}(x)}$. Hence
 \begin{align}
 &\int_{\Omega}F_{\xi_{i}}(\nabla u_{\lambda})F_{\xi_{j}}(\nabla u_{\lambda})\phi_{x_{i}}\phi_{x_{j}}+F(\nabla u_{\lambda})F_{\xi_{i}\xi_{j}}(\nabla u_{\lambda})\phi_{x_{i}}\phi_{x_{j}}-pu_{\lambda}^{p-1}\phi^{2}dx\nonumber\\
 &\geq\int_{\Omega}F_{\xi_{i}}(\nabla u_{\lambda})F_{\xi_{j}}(\nabla u_{\lambda})\phi_{x_{i}}\phi_{x_{j}}-pu_{\lambda}^{p-1}\phi^{2}dx\nonumber\\
 &=\int_{\Omega}\left|\frac{x}{F^{0}(x)}\cdot\nabla\phi\right|^{2}-pu_{\lambda}^{p-1}\phi^{2}dx\nonumber\\
 &\geq\int_{\Omega}\left|\frac{x}{F^{0}(x)}\cdot\nabla\phi\right|^{2}-\frac{(N-2)^{2}}{4}\frac{\phi^{2}}{F^{0}(x)^{2}}dx\geq0,\nonumber
 \end{align}
 the last inequality follows by  Proposition \ref{pro2} with $s=2$.  The desired result is proved.
\end{proof}

\section{Monotonicity Formulas } \label{Sec-MF}

Here we state the following monotonicity formulas for the equation (\ref{1.1}) with $f(u)$ is exponential-type and power-type nonlinearities. For the isotropic case, the following result $(i)$ is given in \cite{AY}, and $({ii})$ and  $({iii})$ are given by \cite{P} and \cite{GW},  respectively.

\begin{thm}\label{thm4}
Let $u\in H^{1,2}_{loc}(\mathbb{R}^{N})$ be a weak solution of (\ref{1.1}), for $x_{0}\in \mathbb{R}^{N}$ and $\lambda>0$, under the assumption of (\ref{1.7}).

 \begin{enumerate}
 \item [(i)]  If $f(u)=e^{u}$ and assume that $e^{u}\in L^{1}_{loc}(\mathbb{R}^{N})$ we define
\begin{align}\label{1.8}
E(u,x_{0},\lambda):=\lambda^{2-n}\int_{B_{\lambda}(x_{0})}\frac{1}{2}F(\nabla u)^{2}-e^{u} dx
+2\lambda^{1-n}\int_{\partial B_{\lambda}(x_{0})}(u+2\log\lambda)dS,
\end{align}
then $E(u,x_{0},\lambda)$ is a nondecreasing function of $\lambda$.
 \item [(ii)]  
If $f(u)=u^{p}$, and $u\in L^{p+1}_{loc}(\mathbb{R}^{N})$ we define
\begin{align}\label{1.9}
E_{1}(u,x_{0},\lambda):=\lambda^{\frac{2p+2}{p-1}-n}\int_{B_{\lambda}(x_{0})}\frac{1}{2}F(\nabla u)^{2}-\frac{1}{p+1}u^{p+1}dx
+\frac{1}{p-1}\lambda^{\frac{p+3}{p-1}-n}\int_{\partial B_{\lambda}(x_{0})}u^{2}(x)dS,
\end{align}
then $E_{1}(u,x_{0},\lambda)$ is a nondecreasing function of $\lambda$.
 \item [(iii)]  
If $f(u)=-u^{-p}$, and $u^{1-p}\in L^{1}_{loc}(\mathbb{R}^{N})$ we define
\begin{align}\label{1.10}
E_{2}(u,x_{0},\lambda):=\lambda^{\frac{2p-2}{p+1}-n}\int_{B_{\lambda}(x_{0})}\frac{1}{2}F(\nabla u)^{2}+\frac{1}{1-p}u^{1-p}dx
-\frac{1}{p+1}\lambda^{\frac{p-3}{p+1}-n}\int_{\partial B_{\lambda}(x_{0})}u^{2}(x)dS,
\end{align}
then $E_{2}(u,x_{0},\lambda)$ is a nondecreasing function of $\lambda$.

 \end{enumerate}

\end{thm}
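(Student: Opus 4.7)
The plan is to adapt the classical rescaling argument of Pacard~\cite{P} (see also~\cite{AY,GW}) to the Finsler setting, using the scaling invariance of each equation together with the orthogonality identity~(\ref{1.7}). The three cases follow the same template, so I would describe the exponential case~(i) in detail and indicate the cosmetic modifications required for (ii) and (iii).

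First I would introduce the rescaled solution $u^\lambda(x) := u(x_0+\lambda x) + 2\log\lambda$. Using that $F$ is one-homogeneous (so $F(\nabla u^\lambda)(x) = \lambda F(\nabla u)(x_0+\lambda x)$ and $F_\xi$ is zero-homogeneous), one checks $Qu^\lambda(x) = \lambda^2 (Qu)(x_0+\lambda x)$, hence $-Qu^\lambda = e^{u^\lambda}$ on $B_1(0)$. A change of variables $y = x_0 + \lambda x$ (with $dy = \lambda^N dx$ and $dS_y = \lambda^{N-1} dS_x$) then identifies
\begin{equation*}
E(u,x_0,\lambda) = \int_{B_1}\left[\tfrac{1}{2}F(\nabla u^\lambda)^2 - e^{u^\lambda}\right] dx + 2\int_{\partial B_1} u^\lambda\, dS.
\end{equation*}
The specific powers $\lambda^{2-N}$, $\lambda^{1-N}$ and the additive $2\log\lambda$ appearing in~(\ref{1.8}) are precisely those that make $E$ depend on $\lambda$ only through $u^\lambda$. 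The analogous reductions for~(ii) and~(iii) use $u^\lambda(x) := \lambda^{2/(p-1)} u(x_0+\lambda x)$ with boundary correction $\tfrac{1}{p-1}(u^\lambda)^2$, and $u^\lambda(x) := \lambda^{-2/(p+1)} u(x_0+\lambda x)$ with correction $-\tfrac{1}{p+1}(u^\lambda)^2$, respectively.

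Next I would set $w := \partial_\lambda u^\lambda$, differentiate under the integral, and integrate the bulk gradient term by parts using the rescaled equation $-Qu^\lambda = f(u^\lambda)$ on $B_1$; the two interior integrals cancel and only a boundary contribution remains,
\begin{equation*}
\tfrac{dE}{d\lambda} = \int_{\partial B_1}\left[F(\nabla u^\lambda)\,F_\xi(\nabla u^\lambda)\cdot\nu_E\right] w\, dS + 2\int_{\partial B_1} w\, dS,
\end{equation*}
where $\nu_E = \nabla F^0(x)/|\nabla F^0(x)|$ is the Euclidean outward normal to $\partial B_1 = \{F^0 = 1\}$. Applying~(\ref{1.7}) with first argument $\nabla u^\lambda$ and second argument $x\in\partial B_1$ (so $F^0(x)=1$) yields
\begin{equation*}
F(\nabla u^\lambda)\, F_\xi(\nabla u^\lambda)\cdot \nabla F^0(x) = \langle x, \nabla u^\lambda\rangle,
\end{equation*}
so the anisotropic flux equals $\langle x,\nabla u^\lambda\rangle/|\nabla F^0(x)|$. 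A direct chain-rule computation shows $w|_{\partial B_1} = \lambda^{-1}(x\cdot\nabla u^\lambda + C)$ with $C=2$ in case~(i), $C=\tfrac{2}{p-1}u^\lambda$ in case~(ii), and $C=-\tfrac{2}{p+1}u^\lambda$ in case~(iii), and this $C$ is exactly the coefficient produced by differentiating the boundary correction. Interpreting the boundary integrals with respect to the anisotropic surface measure $d\sigma := dS/|\nabla F^0|$ (naturally produced by the Wulff radial parametrization via the coarea formula), the two boundary terms align into a perfect square
\begin{equation*}
\tfrac{dE}{d\lambda} = \frac{1}{\lambda}\int_{\partial B_1}\bigl(x\cdot\nabla u^\lambda + C\bigr)^2\, d\sigma \;\geq\; 0.
\end{equation*}

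The main obstacle is precisely this matching step. In the Euclidean case $|\nabla F^0|\equiv 1$, so the anisotropic flux reduces to $\partial_r u^\lambda$ and the identity $w = \lambda^{-1}(\partial_r u^\lambda + C)$ immediately yields a square; in the Finsler setting the flux carries an extra $1/|\nabla F^0|$ factor, and assumption~(\ref{1.7}) is what allows one to trade $FF_\xi\cdot \nabla F^0$ for the plain directional derivative $x\cdot\nabla u^\lambda$. Carefully tracking the anisotropic surface measure so that the resulting integrand is a genuine square is the delicate point; once it is done, the three monotonicity formulas follow by the common closing calculation above.
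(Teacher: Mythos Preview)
Your proposal is correct and follows essentially the same route as the paper: rescale by the natural self-similar transformation so that the energy depends on $\lambda$ only through $u^\lambda$, differentiate, integrate by parts using the rescaled equation to kill the bulk terms, and invoke (\ref{1.7}) to convert the anisotropic flux $F(\nabla u^\lambda)\,F_\xi(\nabla u^\lambda)\cdot\nu$ into the radial derivative $\langle x,\nabla u^\lambda\rangle$; the boundary correction then cancels the cross term and leaves $\lambda\int_{\partial B_1}(\partial_\lambda u^\lambda)^2\geq 0$, which is exactly your perfect square $\tfrac{1}{\lambda}\int(x\cdot\nabla u^\lambda+C)^2$ rewritten via $\partial_\lambda u^\lambda=\lambda^{-1}(x\cdot\nabla u^\lambda+C)$. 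The only visible difference is that you track the anisotropic surface measure $d\sigma=dS/|\nabla F^0|$ explicitly, whereas the paper silently identifies $\nu$ with $F^0_\xi$ and writes $dS$ throughout; your version is the more careful bookkeeping of the same computation.
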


\begin{proof}
$(i)$ For $f(u)=e^{u}$, define
$$E(\lambda):=\lambda^{2-n}\int_{B_{\lambda}(x_{0})}\frac{1}{2}F(\nabla u)^{2}-e^{u}dx. $$
Set
$u^{\lambda}(x)=u(\lambda x)+2\log\lambda$,
then we have
\begin{align}
E(\lambda)&=\lambda^{2-n}\int_{B_{\lambda}(x_{0})}\frac{1}{2}F(\nabla u)^{2}-e^{u}dx\nonumber\\
&=\lambda^{2-n}\int_{B_{1}(x_{0})}[\frac{1}{2}F(\nabla u^{\lambda})^{2}-e^{u^{\lambda}}]\lambda^{n-2} dy\nonumber\\
&=\int_{B_{1}(x_{0})}\frac{1}{2}F(\nabla u^{\lambda})^{2}-e^{u^{\lambda}} dy . \nonumber
\end{align}
It follows from Theorem \myref{A} and (\ref{1.7}), we have
\begin{align}
\frac{d}{d\lambda}E(\lambda)&=\int_{B_{1}(x_{0})}F(\nabla u^{\lambda})F_{\xi_{i}}(\nabla u^{\lambda})\frac{d}{d\lambda}\frac{\partial u^{\lambda}}{\partial x_{i}}-e^{u^{\lambda}}\frac{du^{\lambda}}{d\lambda} dy\nonumber\\
&=\int_{\partial B_{1}(x_{0})}F(\nabla u^{\lambda})F_{\xi_{i}}(\nabla u^{\lambda})\frac{du^{\lambda}}{d\lambda}\nu_{i}dS\nonumber\\
&=\int_{\partial B_{1}(x_{0})}F(\nabla u^{\lambda})\langle F_{\xi}(\nabla u^{\lambda}), F^{0}_{\xi}(y)\rangle\frac{du^{\lambda}}{d\lambda}dS\nonumber\\
&=\int_{\partial B_{1}(x_{0})}\langle \nabla u^{\lambda}, y\rangle\frac{du^{\lambda}}{d\lambda}dS\nonumber\\
&=\int_{\partial B_{1}(x_{0})}(\lambda\frac{du^{\lambda}}{d\lambda}-2)\frac{du^{\lambda}}{d\lambda}dS\nonumber\\
&=\int_{\partial B_{1}(x_{0})}\lambda(\frac{du^{\lambda}}{d\lambda})^{2}-2\frac{du^{\lambda}}{d\lambda}dS . \nonumber
\end{align}
Define
$$E(u,x_{0},\lambda):=\lambda^{2-n}\int_{B_{\lambda}(x_{0})}\frac{1}{2}F(\nabla u)^{2}-e^{u} dx
+2\lambda^{1-n}\int_{\partial B_{\lambda}(x_{0})}(u+2\log\lambda)dS,$$
therefore, we have
$$\frac{d}{d\lambda}E(u,x_{0},\lambda)\geq0.$$

$(ii)$ If $f(u)=u^{p}$, then 
$$E_{1}(\lambda):=\lambda^{\frac{2p+2}{p-1}-N}\int_{B_{\lambda}(x_{0})}\frac{1}{2}F^{2}(\nabla u)-\frac{1}{p+1}u^{p+1}dx . $$
Set $u_{\lambda}(x)=\lambda^{\frac{2}{p-1}}u(\lambda x)$, we know that
$$E_{1}(\lambda)=\int_{B_{1}(x_{0})}\frac{1}{2}F^{2}(\nabla u_{\lambda}(y))-\frac{1}{p+1}u_{\lambda}(y)^{p+1}dy . $$
Hence, it follows from Theorem \myref{A} and (\ref{1.7}), we have
\begin{align}
\frac{d}{d\lambda}E_{1}(\lambda)&=\int_{B_{1}(x_{0})}F(\nabla u_{\lambda}(y))F_{\xi_{i}}(\nabla u_{\lambda}(y))\frac{d}{d\lambda}\frac{\partial u_{\lambda}(y)}{y_{i}}-u_{\lambda}(y)^{p}\frac{du_{\lambda}(y)}{d\lambda}dy\nonumber\\
&=\int_{\partial B_{1}(x_{0})}F(\nabla u_{\lambda}(y))F_{\xi_{i}}(\nabla u_{\lambda}(y))\frac{du_{\lambda}(y)}{d\lambda}\nu_{i}dS\nonumber\\
&=\int_{\partial B_{1}(x_{0})}\langle \nabla u_{\lambda}(y),y\rangle\frac{du_{\lambda}(y)}{d\lambda}dS\nonumber\\
&=\int_{\partial B_{1}(x_{0})}[\lambda\frac{du_{\lambda}(y)}{d\lambda}-\frac{2}{p-1}u_{\lambda}(y)]\frac{du_{\lambda}(y)}{d\lambda}dS\nonumber\\
&=\int_{\partial B_{1}(x_{0})}\lambda(\frac{du_{\lambda}(y)}{d\lambda})^{2}-\frac{1}{p-1}\frac{du^{2}_{\lambda}(y)}{d\lambda}dS.\nonumber
\end{align}

Define
\begin{align}
E_{1}(u,x_{0},\lambda):&=\int_{B_{1}(x_{0})}\frac{1}{2}F^{2}(\nabla u_{\lambda}(y))-\frac{1}{p+1}u_{\lambda}(y)^{p+1}dy
+\int_{\partial B_{1}(x_{0})}\frac{1}{p-1}u^{2}_{\lambda}(y)dS\nonumber\\
&=\lambda^{\frac{2p+2}{p-1}-N}\int_{B_{\lambda}(x_{0})}\frac{1}{2}F^{2}(\nabla u)-\frac{1}{p+1}u^{p+1}dx +\frac{1}{p-1}\lambda^{\frac{p+3}{p-1}-N}\int_{\partial B_{\lambda}(x_{0})}u^{2}(x)dS . \nonumber
\end{align}
Therefore, we have
$$\frac{d}{d\lambda}E_{1}(u,x_{0},\lambda)\geq0.$$

$(iii)$ Let $f(u)=-u^{-p}$, then 
$$E_{2}(\lambda):=\lambda^{\frac{2(p-1)}{p+1}-N}\int_{B_{\lambda}(x_{0})}\frac{1}{2}F^{2}(\nabla u)+\frac{1}{1-p}u^{1-p}dx . $$
Set $u_{\lambda}(x)=\lambda^{\frac{2}{p-1}}u(\lambda x)$, we know that
$$E_{2}(\lambda)=\int_{B_{1}(x_{0})}\frac{1}{2}F^{2}(\nabla u_{\lambda}(y))+\frac{1}{1-p}u_{\lambda}(y)^{1-p}dy. $$
Hence, it follows from Theorem \myref{A} and (\ref{1.7}), we have
\begin{align}
\frac{d}{d\lambda}E_{2}(\lambda)&=\int_{B_{1}(x_{0})}F(\nabla u_{\lambda}(y))F_{\xi_{i}}(\nabla u_{\lambda}(y))\frac{d}{d\lambda}\frac{\partial u_{\lambda}(y)}{y_{i}}+u_{\lambda}(y)^{-p}\frac{du_{\lambda}(y)}{d\lambda}dy\nonumber\\
&=\int_{\partial B_{1}(x_{0})}F(\nabla u_{\lambda}(y))F_{\xi_{i}}(\nabla u_{\lambda}(y))\frac{du_{\lambda}(y)}{d\lambda}\nu_{i}dS\nonumber\\
&=\int_{\partial B_{1}(x_{0})}\langle \nabla u_{\lambda}(y),y\rangle\frac{du_{\lambda}(y)}{d\lambda}dS\nonumber\\
&=\int_{\partial B_{1}(x_{0})}[\lambda\frac{du_{\lambda}(y)}{d\lambda}+\frac{2}{p+1}u_{\lambda}(y)]\frac{du_{\lambda}(y)}{d\lambda}dS\nonumber\\
&=\int_{\partial B_{1}(x_{0})}\lambda(\frac{du_{\lambda}(y)}{d\lambda})^{2}+\frac{1}{p+1}\frac{du^{2}_{\lambda}(y)}{d\lambda}dS.\nonumber
\end{align}

Define
\begin{align}
E_{2}(u,x_{0},\lambda):&=\int_{B_{1}(x_{0})}\frac{1}{2}F^{2}(\nabla u_{\lambda}(y))+\frac{1}{1-p}u_{\lambda}(y)^{1-p}dy -\int_{\partial B_{1}(x_{0})}\frac{1}{p+1}u^{2}_{\lambda}(y)dS\nonumber\\
&=\lambda^{\frac{2p-2}{p+1}-N}\int_{B_{\lambda}(x_{0})}\frac{1}{2}F^{2}(\nabla u)+\frac{1}{1-p}u^{1-p}dx-\frac{1}{p+1}\lambda^{1-N-\frac{4}{p+1}}\int_{\partial B_{\lambda}(x_{0})}u^{2}(x)dS . \nonumber
\end{align}
Therefore, we have
$$\frac{d}{d\lambda}E_{2}(u,x_{0},\lambda)\geq0,$$
This completes the proof. 
\end{proof}

\section*{Acknowledgements}
The second author would like to thank University of Texas at San Antonio for hospitality during his visits.

\end{document}